\newcommand{\R}{{\mathbb{R}}}
\declaretheorem[numberwithin=section]{theorem}
\declaretheorem[sibling=theorem]{lemma}
\declaretheorem[sibling=theorem]{corollary}
\declaretheorem[sibling=theorem]{definition}
\declaretheorem[sibling=theorem]{fact}
\declaretheorem[sibling=theorem]{observation}
\crefname{theorem}{Theorem}{Theorems}
\crefname{lemma}{Lemma}{Lemmas}
\crefname{corollary}{Corollary}{Corollaries}
\crefname{definition}{Definition}{Definitions}
\crefname{fact}{Fact}{Facts}
\crefname{observation}{Observation}{Observations}
\crefname{section}{Section}{Sections}
\title{Graph Coloring and Semidefinite Rank\thanks{Supported by NSF grant CCF-2007009.}}
\author{Renee Mirka \and Devin Smedira \and David P.\ Williamson}
\date{%
\{rem379, dts88, davidpwilliamson\}@cornell.edu\\%
\vspace{3mm}
Cornell University, Ithaca NY 14850, USA}
\begin{document}

\maketitle

\begin{abstract}
This paper considers the interplay between semidefinite programming, matrix rank, and graph coloring.  Karger, Motwani, and Sudan \cite{KMS98} give a vector program for which a coloring of the graph can be encoded as a semidefinite matrix of low rank.  By complementary slackness conditions of semidefinite programming, if an optimal dual solution has sufficiently high rank, any optimal primal solution must have low rank.  We attempt to characterize graphs for which we can show that the corresponding dual optimal solution must have sufficiently high rank.  In the case of the original Karger, Motwani, and Sudan vector program, we show that any graph which is a $k$-tree has sufficiently high dual rank, and we can extract the coloring from the corresponding low-rank primal solution.  We can also show that if the graph is not uniquely colorable, then no sufficiently high rank dual optimal solution can exist.  This allows us to completely characterize the planar graphs for which dual optimal solutions have sufficiently high dual rank, since it is known that the uniquely colorable planar graphs are precisely the planar 3-trees.

We then modify the semidefinite program to have an objective function with costs, and explore when we can create a cost function whose optimal dual solution has sufficiently high rank.  We show that it is always possible to construct such a cost function given the graph coloring.  The construction of the cost function gives rise to a heuristic for graph coloring which we show works well in the case of planar graphs; we enumerated all maximal planar graphs with a $K_4$ of up to 14 vertices, and the heuristics successfully colored 99.75\% of them.

Our research was motivated by the Colin de Verdi\`ere graph invariant \cite{CDV90} (and a corresponding conjecture of Colin de Verdi\`ere), in which matrices that have some similarities to the dual feasible matrices must have high rank in the case that graphs are of a certain type; for instance, planar graphs have rank that would imply the 4-colorability of the primal solution.  We explore the connection between the conjecture and the rank of the dual solutions.
\end{abstract}

\section{Introduction}

Given an undirected graph $G=(V,E)$, a {\em coloring} of $G$ is an assignment of colors to the vertices $V$ such that for each edge $(i,j) \in E$, $i$ and $j$ receive different colors.  The {\em chromatic number} of $G$, denoted $\chi(G)$, is the minimum number of colors used such that a coloring of $G$ exists.  The {\em clique number} of a graph $G$, denoted $\omega(G)$, is the size of the largest {\em clique} in the graph; a set $S \subseteq V$ of vertices is a clique if for every distinct pair $i,j \in S$, $(i,j) \in E$.  It is easy to see that $\omega(G) \leq \chi(G)$.  Graph colorings have been intensively studied for over a century.  One of the most well-known theorems of graph theory, the {\em four-color theorem}, states that four colors suffice to color any planar graph $G$; the problem of four-coloring a planar graph can be traced back to the 1850s, and the computer-assisted proof of the four-color theorem by Appel and Haken \cite{AppelH77, AppelHK77} is considered a landmark in graph theory.  See Jensen and Toft \cite{JensenT95} and Molloy and Reed \cite{MolloyR02} for book-length treatments of graph coloring in general. Fritsch and Fritsch \cite{FritschF94}, Ore \cite{Ore67}, and Wilson \cite{Wilson02} provide book-length treatments of the four-color theorem in particular, and Robertson, Sanders, Seymour, and Thomas \cite{RobertsonSST97} give a simplified computer-assisted proof of the four-color theorem. 

This paper considers the use of semidefinite programming in graph coloring.  The connection between semidefinite programming and graph coloring was initiated by Lov\'asz \cite{Lovasz79}, who introduced the Lov\'asz theta function, $\theta(\bar G)$, which is computable via semidefinite programming; $\bar G$ is the complement of graph $G$, in which all edges of $G$ are replaced by nonedges and vice versa.  Lov\'asz showed that $\omega(G) \leq \theta(\bar G) \leq \chi(G)$; a helpful overview of this result was given by Knuth \cite{Knuth94}.  

Another use of semidefinite programming for graph coloring was introduced by Karger, Motwani, and Sudan \cite{KMS98} (KMS), who showed how to color $k$-colorable graphs with $O(n^{1-3/(k+1)}\log^{1/2}n)$ colors in polynomial time using semidefinite programming, where $n$ is the number of vertices in the graph.  A starting point of the algorithm of KMS is the following vector program, which KMS called the {\em strict vector chromatic number}; the vector program can be solved via semidefinite programming:
\begin{equation*}
    \begin{array}{llll}
        \text{minimize} & \alpha &\\
        \text{subject to} & v_i \cdot v_j = \alpha, & \forall (i,j)\in E,\\
                          & v_i \cdot v_i = 1,    & \forall i \in V, \\
                          & v_i \in \mathbb{R}^n, & \forall i \in V.
    \end{array}
\end{equation*}
KMS observe that any $k$-colorable graph has a feasible solution to the vector program with $\alpha = -1/(k-1)$: let $v_1 = (1, 0, \dots, 0) \in \Re^{k-1}$ and inductively find $v_i \in \Re^{k-1}$ for $1 < i \leq k-1$ by setting $v_i(j) = 0$ for $j>i$ and otherwise solving the system of equations given by $v_l \cdot v_i = -1/(k-1)$ for $1\leq l \leq i-1$ and $v_i \cdot v_i = 1$. Finally, let $v_k = -\sum_{j=1}^{k-1}v_j$, then assign one color to each of the vectors $v_i$. This guarantees that each vector $v_i$ has unit length (so $v_i \cdot v_i = 1$) and that for any edge $(i,j) \in E$, $v_i \cdot v_j = -1/(k-1)$. 
It is important for the following discussion to observe that this solution lies in a $(k-1)$-dimensional space.  KMS also observe that there is a natural connection between the strict vector chromatic number and the Lov\'asz theta function.  In particular, for the solution $\alpha$ to the vector program above, it is possible to show that $\alpha = 1/(1-\theta(\bar G))$ (see \cite[Theorem 8.2]{KMS98}).  If the graph $G$ has a $k$-clique $K_k$ and is $k$-colorable, then by Lov\'asz's theorem, $\theta(\bar G)=k$, and so the feasible solution with $\alpha=-1/(k-1)$ is an optimal solution.  It is also possible to argue directly that a graph with a $K_k$ must have $\alpha \geq -1/(k-1)$, again proving that the feasible solution given above is an optimal one.  We will call the feasible solution above (in which all the vectors are recursively constructed) the {\em reference solution}.  

The goal of this paper is to explore situations in which the reference solution is the unique optimal solution of a semidefinite program (SDP), either the SDP corresponding to the strict vector chromatic number given above, or another that we will give shortly.  To do this, we will use complementary slackness conditions for semidefinite programs.  Consider the primal and dual SDPs shown in standard form below, where the constraint that $X$ is a positive semidefinite matrix is represented by $X \succeq 0$, and we take the outer product of matrices, so that $C \bullet X$, for instance, denotes $\sum_{i=1}^\ell \sum_{j=1}^\ell c_{ij} x_{ij}$.
\begin{equation*}
    \begin{array}{llllll}
        \text{minimize} & C \bullet X&  & \text{maximize}&  b^T y\\
        \text{subject to} & A_i \bullet X = b_i & \mbox{for } i = 1, \ldots, m, \qquad & \text{subject to} & S = C - \sum_{i=1}^m y_i A_i, \\
        & X \succeq 0, & & & S \succeq 0, \\
        & X \in  \Re^{\ell \times \ell}, & & & S \in \Re^{\ell \times \ell}.
    \end{array}
\end{equation*}
Duality theory for semidefinite programs (e.g. Alizadeh \cite{Alizadeh95}) shows that for any feasible primal solution $X$ and any feasible dual solution $y$, $C \bullet X \geq b^T y$.  Furthermore if $C \bullet X = b^Ty$, so that the solutions are optimal, then it must be the case that $\mbox{rank}(X) + \mbox{rank}(S) \leq \ell$, and $XS = 0$. Thus if we want to show that any optimal primal solution has rank at most $r$, it suffices to show the existence of an optimal dual solution of rank at least $\ell -r$.  Turning back to the strict vector chromatic number vector program, the corresponding dual vector program is 
\begin{equation*}
    \begin{array}{llll}
        \text{maximize} & -\sum_i v_i\cdot v_i &\\
        \text{subject to} & \sum_{i\neq j} v_i \cdot v_j \geq 1, &  \\
                          & v_i \cdot v_j = 0,    & \forall (i,j) \notin E, i\neq j \\
                          & v_i \in \mathbb{R}^n, & \forall i \in V.
    \end{array}
\end{equation*}
Thus, given a $k$-colorable graph $G$ with a $K_k$, if we can show a dual feasible solution of value $-1/(k-1)$ and rank $n-k+1$, then we know that the primal solution must have rank at most $k-1$; we'll show that this will give us the reference solution\footnote{There are some subtleties here we are glossing over in the interest of getting across the main idea.  In particular, the SDP corresponding to the strict vector chromatic number vector program has dimension $n+1$, not $n$; we explain why and why that doesn't matter for our purposes in Section \ref{sec:svcn}.}.  We will for shorthand say that there is an optimal dual of {\em sufficiently high rank}.

Our first result is to partially characterize the set of graphs for which the optimal solution to the strict vector chromatic number vector program  is the reference solution.  In particular, we can show that if the graph is a {\em $(k-1)$-tree}, then the reference solution is the unique optimal solution to the SDP.  In the opposite direction, if the graph is not {\em uniquely colorable}, then the dual does not have sufficiently high rank, and there exist optimal primal solutions that are not the reference solution and are at least $k$-dimensional.  A $(k-1)$-tree is a graph constructed by starting with a complete graph on $k$ vertices.  We then iteratively add vertices $v$; for each new vertex $v$, we add $k-1$ edges from $v$ to previously added vertices such that $v$ together with these $k-1$ neighbors form a clique.  A $k$-colorable graph is uniquely colorable if it has only one possible coloring up to a permutation of the colors.  The $k$-tree graphs are easily shown to be unique colorable.  In the case of planar graphs with a $K_4$, these results imply a complete characterization of the graphs for which the optimal solution is the reference solution, since it is known that the uniquely 4-colorable planar graphs are exactly the planar 3-trees, also known as the Apollonian networks \cite{Fowler98}.    We argue that it is not surprising that graphs are not uniquely $k$-colorable do not have the reference solution as the sole optimal solution; we show that one can find a convex combination of the two different reference solutions corresponding to the two different colorings that gives an optimal SDP solution of rank higher than $k-1$, and clearly the convex combination is also feasible for the SDP.

To get around the issue of unique colorability, we instead look for minimum-cost feasible solutions to the SDP above.  That is, given a cost matrix $C$, we look to find optimal solutions to the primal SDP
\begin{equation*}
    \begin{array}{llll}
        \text{minimize} & C \bullet X &\\
        \text{subject to} & X_{ij} = -1/(k-1), & \forall (i,j)\in E \\
          & X_{ii} = 1,    & \forall i \in V \\
         & X \succeq 0.& 
    \end{array}
\end{equation*}
\noindent The corresponding dual SDP is 
\begin{equation*}
    \begin{array}{llll}
        \text{maximize} & \sum_{i=1}^n y_i - \frac{2}{k-1} \sum_{e \in E}z_e &\\
        \text{subject to} & S = C - \sum_{i=1}^n y_iE_{ii} - \sum_{e \in E} z_e E_e &  \\
          & S\succeq 0,& 
    \end{array}
\end{equation*}
where $E_{ii}$ is the matrix with a 1 at position $ii$ and 0 elsewhere and for $e=(i,j)$, $E_e$ is the matrix with 1 at positions $ij$ and $ji$ and 0 elsewhere.
Once again, the reference solution is a feasible solution to the primal SDP.  The goal now is to find a cost function $C$ such that there is an optimal dual solution of sufficiently high rank (here rank $n-k+1$), so that the reference solution is the unique optimal solution to the primal SDP.  We show that it is always possible to find a cost function $C$ such that the dual has sufficiently high rank.  Our construction of $C$ depends on the coloring of the graph; however, we do show that such a $C$ exists.  

Furthermore, the construction of $C$ suggests a heuristic for finding a coloring of the graph, and we show that the heuristic works well.   We enumerated all maximal planar graphs of up to 14 vertices containing a $K_4$.  The heuristics successfully colored all graphs of up to 11 vertices, and at least 99.75\% of all graphs on 12, 13, and 14 vertices.    The heuristics involve repeatedly solving semidefinite programs, and thus are not practical for large graphs (although they still run in polynomial time).  However, we view them as a proof of concept that it might be possible to use our framework to reliably 4-color planar graphs.

Our interest in this direction of research was prompted by the Colin de Verdi\`ere invariant \cite{CDV90} (see also \cite{HolstLS99} for a useful survey of the invariant).  A {\em generalized Laplacian} $L = (\ell_{ij})$ of graph $G$ is a matrix such that the entries $\ell_{ij} < 0$ when $(i,j) \in E$, and $\ell_{ij} = 0$ when $(i,j) \notin E$.  The Colin de Verdi\'ere invariant, $\mu(G)$, is defined as follows.
\begin{definition}  The Colin de Verdi\`ere invariant $\mu(G)$ is the largest corank of a generalized Laplacian $L$ of $G$ such that:
\begin{enumerate}
    \item $L$ has exactly one negative eigenvalue of multiplicity one;
    \item there is no nonzero matrix $X=(x_{ij})$ such that $LX=0$ and such that $x_{ij} = 0$ whenever $i=j$ or $\ell_{ij} \neq 0$.
\end{enumerate}
\end{definition}
\noindent Colin de Verdi\'ere shows that  $\mu(G) \leq 3$ if and only if $G$ is planar; in other words, {\em any} generalized Laplacian of $G$ with exactly one negative eigenvalue of multiplicity 1 will have rank at least $n-3$ (modulo the second condition on the invariant, which we will ignore for the moment).  Other results show that $G$ is outerplanar if and only if $\mu(G) \leq 2$, and $G$ is a collection of paths if and only if $\mu(G) \leq 1$.  Colin de Verdi\`ere \cite{CDV90} conjectures that $\chi(G) \leq \mu(G) + 1$; this result is known to hold for $\mu(G) \leq 4$.  We note if $G$ is planar that the part of the dual matrix $ - \sum_{i=1}^n y_iE_{ii} - \sum_{e \in E} z_e E_e$ is indeed a generalized Laplacian $L$ of a planar graph when the $z_e \geq 0$ for all $e \in E$, and that if $G$ is connected, then the $y_i$ can be adjusted so that this matrix has a single negative eigenvalue of multiplicity one.  Thus this part of the matrix, under these conditions, must have sufficiently high rank, as desired to verify that the optimal primal solution is the reference solution.  This would show that if the graph $G$ has a clique on $\mu(G)+1$ vertices, then indeed $\chi(G) = \mu(G)+1$.  So, for example, this would prove that any planar graph with a $K_4$ can be four-colored, leading to a non-computer assisted proof of the four-color theorem. However, we do not know how to find the corresponding cost matrix $C$ or show that the dual $S$ we find is optimal.   Still, we view our heuristics as a step towards finding a way to construct the cost matrix $C$ without knowledge of the coloring, and without reliance on the machinery of the proofs of the four-color theorem that have been developed thus far.  

The rest of this paper is structured as follows.  In Section \ref{prelim}, we give some preliminary results on semidefinite programming.  In Section \ref{sec:svcn}, we show our results for the strict vector chromatic number SDP, and show that $(k-1)$-trees imply dual solutions of sufficiently high rank, while graphs that are not uniquely colorable imply that such dual solutions cannot exist.  In Section \ref{modify}, we turn to the SDP with cost matrix $C$, and show that for any $k$-colorable graph with a $k$-clique, a cost matrix $C$ exists that gives rise to a dual of sufficiently high rank.  In Section \ref{experiment}, we give two heuristics for coloring planar graphs based on our construction of the cost matrix $C$, and show a case where the heuristic fails to find a 4-coloring of a planar graph.  Finally, we turn to some further thoughts and remaining open questions in Section \ref{conc}.


\section{Preliminaries}\label{prelim}

In this section, we recall some basic facts about semidefinite matrices and semidefinite programs that we will use in subsequent sections.

As noted in the introduction, the primal and dual semidefinite programs, which we will label by $(P)$ and $(D)$ respectively, are as follows:
\begin{equation*}
    \begin{array}{llllll}
        \text{minimize} & C \bullet X&  & \text{maximize}&  b^T y\\
        \text{subject to} & A_i \bullet X = b_i, & \mbox{for } i = 1, \ldots, m, \qquad & \text{subject to} & S = C - \sum_{i=1}^m y_i A_i, \\
        (P) & X \succeq 0, & & (D) & S \succeq 0 ,\\
        & X \in  \Re^{\ell \times \ell}, & & & S \in \Re^{\ell \times \ell}.
    \end{array}
\end{equation*}
We use $\bullet$ to denote matrix outer products, so that $C \bullet X$, for instance, denotes $\sum_{i=1}^\ell \sum_{j=1}^\ell c_{ij} x_{ij}$.

We always have weak duality for semidefinite programs, so that the following holds.
\begin{fact}  Given any feasible $X$ for $(P)$ and $y$ for $(D)$, $C \bullet X \geq b^Ty$.
\end{fact}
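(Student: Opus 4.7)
The plan is the textbook derivation of weak duality for semidefinite programs. The main identity to establish is
\[
C \bullet X - b^T y = S \bullet X,
\]
after which the fact follows from the inner-product-of-PSD-matrices inequality $S \bullet X \geq 0$.

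First I would expand $b^T y = \sum_{i=1}^m y_i b_i$ and substitute $b_i = A_i \bullet X$, which holds because $X$ is primal feasible. Swapping the sum and the outer product (both are linear in the matrix argument), I get $b^T y = \bigl(\sum_{i=1}^m y_i A_i\bigr) \bullet X$. Combining with $C \bullet X$ and using the defining dual equation $S = C - \sum_{i=1}^m y_i A_i$ yields $C \bullet X - b^T y = S \bullet X$. This step is entirely routine linear algebra.

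The remaining task is to argue $S \bullet X \geq 0$ whenever $S, X \succeq 0$. I would prove this by eigendecomposing $S = \sum_j \lambda_j u_j u_j^T$ with $\lambda_j \geq 0$; then
\[
S \bullet X = \operatorname{tr}(SX) = \sum_j \lambda_j \, u_j^T X u_j \geq 0,
\]
since each $u_j^T X u_j \geq 0$ by $X \succeq 0$ and each $\lambda_j \geq 0$. (Equivalently, writing $X = BB^T$ and $S = DD^T$ gives $S \bullet X = \|D^T B\|_F^2 \geq 0$.) This is the only nontrivial ingredient, and it is a standard lemma; I would either cite it as a known property of the PSD cone or include the one-line eigendecomposition argument above.

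Putting the two pieces together gives $C \bullet X - b^T y = S \bullet X \geq 0$, which is the claim. I do not anticipate any real obstacle: the only thing to be careful about is keeping the linearity manipulation and the definition of $S$ straight, and making sure the PSD inner product inequality is invoked (or proved) explicitly rather than assumed silently.
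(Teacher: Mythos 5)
Your proof is correct and complete: the identity $C \bullet X - b^T y = S \bullet X$ follows exactly as you describe from primal feasibility and the definition of $S$, and the nonnegativity of the trace inner product of two positive semidefinite matrices is established correctly by either of your two arguments. The paper itself offers no proof of this fact---it simply states weak duality as a known property of semidefinite programs (in the spirit of the cited reference of Alizadeh)---so your write-up is precisely the standard derivation the paper is implicitly relying on, and nothing more needs to be said.
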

Thus if we can produce a feasible $X$ for $(P)$ and a feasible $y$ for $(D)$ such that $C \bullet X = b^Ty$, then $X$ must be optimal for $(P)$ and $y$ optimal for $(D)$.

The following is also known, and is the semidefinite programming version of complementary slackness conditions for linear programming.
\begin{fact}{\cite[Theorem 2.10, Corollary 2.11]{Alizadeh95}} \label{fact:sdp-dual-rank}
For optimal $X$ for $(P)$ and $y$ for $(D)$, $XS = 0$ and $\text{rank}(X) + \text{rank}(S) \leq \ell$.
\end{fact}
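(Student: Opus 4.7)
The plan is to derive both conclusions from the single identity $S \bullet X = 0$, which in turn follows from strong duality applied to the given optimal pair. First I would unpack $S \bullet X$ using the dual constraint $S = C - \sum_i y_i A_i$ and linearity of the inner product:
\[
S \bullet X \;=\; C \bullet X \;-\; \sum_{i=1}^m y_i (A_i \bullet X) \;=\; C \bullet X \;-\; \sum_{i=1}^m y_i b_i \;=\; C \bullet X - b^T y.
\]
Since $X$ and $y$ are simultaneously optimal, strong duality (assumed here along with the optimality hypothesis, as is standard when a Slater point exists) gives $C \bullet X = b^T y$, so $S \bullet X = 0$. This is the one place where optimality is actually used; everything else is a consequence of $S,X \succeq 0$ together with this vanishing inner product.

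Next I would convert the scalar identity $S \bullet X = \mathrm{tr}(SX) = 0$ into the matrix identity $XS = 0$. The clean route is to spectrally decompose $S = \sum_i \lambda_i u_i u_i^T$ and $X = \sum_j \mu_j v_j v_j^T$ with $\lambda_i,\mu_j \ge 0$, so that
\[
0 \;=\; \mathrm{tr}(SX) \;=\; \sum_{i,j} \lambda_i \mu_j (u_i^T v_j)^2.
\]
Every term is nonnegative, so $u_i^T v_j = 0$ whenever $\lambda_i > 0$ and $\mu_j > 0$. In other words, the column space (range) of $X$ lies in the orthogonal complement of the column space of $S$, which equals $\ker S$. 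Hence $SX = 0$, and taking transposes (both matrices are symmetric) gives $XS = 0$ as well.

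The rank bound is then immediate: $\mathrm{range}(X) \subseteq \ker(S)$ implies $\mathrm{rank}(X) \le \dim \ker(S) = \ell - \mathrm{rank}(S)$, i.e.\ $\mathrm{rank}(X) + \mathrm{rank}(S) \le \ell$. I do not anticipate a real obstacle in the argument; the only subtle point is making sure the optimality hypothesis is strong enough to justify $C \bullet X = b^T y$ (i.e.\ that a duality gap is not lurking), but since the fact is stated for jointly optimal primal–dual solutions under the standard SDP assumptions of the paper, this can simply be invoked rather than reproved.
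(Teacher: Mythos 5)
Your proof is correct. The paper itself offers no proof of this statement---it is imported as a citation to Alizadeh---so there is nothing internal to compare against; your argument is the standard one that the cited reference contains. The chain $S \bullet X = C\bullet X - \sum_i y_i(A_i\bullet X) = C\bullet X - b^Ty = 0$, the spectral expansion $\mathrm{tr}(SX)=\sum_{i,j}\lambda_i\mu_j(u_i^Tv_j)^2$ forcing $\mathrm{range}(X)\perp\mathrm{range}(S)$, and the rank--nullity step are all sound, and you are right to flag that the only hypothesis doing real work is the vanishing of the duality gap. That caveat is harmless in this paper's setting: every invocation of the fact here proceeds by exhibiting a feasible primal--dual pair with equal objective values, so $C\bullet X=b^Ty$ is established directly rather than inferred from a Slater condition.
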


Semidefinite programs and vector programs (such as the strict vector chromatic vector program) are equivalent because a symmetric $X \in \Re^{n \times n}$ is positive semidefinite if and only if $X = QDQ^T$ for a real matrix $Q \in \Re^{n\times n}$ and diagonal matrix $D$ in which the entries of $D$ are the eigenvalues of $X$, and the eigenvalues are all nonnegative.  We can then consider $D^{1/2}$, the diagonal matrix in which each diagonal entry is the square root of the corresponding entry of $D$.  Then $X = (QD^{1/2})(QD^{1/2})^T$.  If we let $v_i \in \Re^n$ be the $i$th row of $QD^{1/2}$, then $x_{ij} = v_i \cdot v_j$, and similarly, given the vectors $v_i$, we can construct a semidefinite matrix $X$ with $x_{ij} = v_i \cdot v_j$.  We also make the following observation based on this decomposition.
\begin{observation} \label{obs:dim}
Given a semidefinite matrix $X=QDQ^T \in \Re^{n \times n}$, $\text{rank}(X) = d$ if and only if the vectors $v_i \in \Re^n$ with $v_i$ the $i$th row of $QD^{1/2}$ are supported on just $d$ coordinates.
\end{observation}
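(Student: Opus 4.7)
The plan is to read off both sides of the equivalence directly from the spectral decomposition. Since $X$ is symmetric PSD, write $X = QDQ^T$ with $Q$ orthogonal and $D = \operatorname{diag}(\lambda_1, \ldots, \lambda_n)$ the diagonal matrix of eigenvalues, all nonnegative. By definition, $\operatorname{rank}(X)$ equals the number of nonzero eigenvalues, i.e.\ $|\{j : \lambda_j > 0\}|$. Set $M = QD^{1/2}$, so that the $i$th row of $M$ is $v_i$, and the $j$th column of $M$ is $\sqrt{\lambda_j}\,q_j$, where $q_j$ denotes the $j$th column of $Q$.

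Next I would unpack what it means for the vectors $\{v_i\}$ to be supported on a set of coordinates. The coordinate $j$ appears in the union of the supports of the $v_i$ if and only if the entry $(v_i)_j$ is nonzero for some $i$, which is the same as saying that the $j$th column of $M$ is not the zero vector. So the size of the joint support equals the number of nonzero columns of $M$.

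Now I would match the two counts. Because $Q$ is orthogonal, each column $q_j$ has unit norm and in particular is nonzero. Hence column $j$ of $M = QD^{1/2}$, which is $\sqrt{\lambda_j}\, q_j$, is zero precisely when $\lambda_j = 0$ and is nonzero precisely when $\lambda_j > 0$. Combining this with the previous paragraph, the joint support of $\{v_i\}$ has size exactly $|\{j : \lambda_j > 0\}| = \operatorname{rank}(X)$, which gives both directions of the equivalence simultaneously.

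There is really no hard step here; the observation is essentially bookkeeping on the eigendecomposition. The only subtlety worth flagging is interpretive: ``supported on $d$ coordinates'' must be read as a statement about the collection $\{v_i\}$ jointly (equivalently, a statement about the column support of $M$), not about each $v_i$ individually, and one needs the orthonormality of the columns of $Q$ to rule out a spurious zero column in $M$ coming from a nonzero eigenvalue.
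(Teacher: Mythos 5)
Your argument is correct, and it is exactly the bookkeeping the paper has in mind: the paper states this as an unproved observation immediately after introducing the decomposition $X = QD^{1/2}(QD^{1/2})^T$, leaving the column-counting you spell out implicit. Your interpretive remark -- that ``supported on $d$ coordinates'' must be read jointly over the collection $\{v_i\}$, and that invertibility (orthogonality) of $Q$ is what rules out a zero column for a positive eigenvalue -- is a fair and accurate clarification of how the observation is used later in the paper.
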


\section{The Strict Vector Chromatic Number SDP}
\label{sec:svcn}

Recall the strict vector chromatic SDP given in the introduction, which we now label (SVCN):
\begin{equation*}
    \begin{array}{llll}
        \text{minimize} & \alpha &\\
        \text{subject to} & v_i \cdot v_j = \alpha, & \forall (i,j)\in E, \\
            \text{(SVCN)}         & v_i \cdot v_i = 1,    & \forall i \in V, \\
                          & v_i \in \mathbb{R}^n, & \forall i \in V.
    \end{array}
\end{equation*}
In this section, we give a partial characterization of graphs for which the reference solution is an optimal solution to (SVCN).

First, we observe that (SVCN) is equivalent to the following semidefinite program:
\begin{equation*}
    \begin{array}{llll}
        \text{minimize} & - z_{00} &\\
        \text{subject to} & z_{ij} + z_{00} = 0, & \forall (i,j)\in E \\
              (SVCN\mbox{-}P)            & z_{ii} = 1,    & \forall i \in V \\
                         & z_{i0} = z_{0i} = 0 & \forall i \in V \\
                          & Z=(z_{ij}) \succeq 0, \\
                          & Z\text{ symmetric, } Z \in \Re^{(n+1)\times (n+1)}
    \end{array}
\end{equation*}
The dual of this SDP is 
\begin{equation*}
    \begin{array}{llll}
        \text{maximize} & - \sum_{i \in V} w_{ii} &\\
        \text{subject to} & w_{00} = -1 + \sum_{i\neq j} w_{ij}, \\
                   (SVCN\mbox{-}D)      
                          & w_{ij} = 0,         & \forall (i,j) \notin E\\
                          & W=(w_{ij}) \succeq 0, \\
                          & W \text{ symmetric, } W \in \Re^{(n+1)\times (n+1)}
    \end{array}
\end{equation*}

In what follows, we will want to relate the rank of the primal submatrix $X=(z_{ij})_{i, j \in V}$ to the rank of the dual submatrix $S = (w_{ij})_{i, j \in V}$; that is, we want to look at the submatrices that don't contain the 0th row and column of the primal solution (corresponding to the variable $\alpha$ in (SVCN)) and the corresponding 0th row and column of the dual solution.  We will henceforward in this section call these submatrices $X$ and $S$.

\begin{lemma}
Given an optimal primal solution $Z$ to (SVCN-P) and optimal dual solution $W$ to (SVCN-D), we have that $\text{rank}(X) + \text{rank}(S) \leq n$.
\end{lemma}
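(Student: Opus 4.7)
The plan is to reduce the desired bound on $\text{rank}(X)+\text{rank}(S)$ to the general complementary slackness fact (Fact \ref{fact:sdp-dual-rank}) applied to the full $(n+1)\times(n+1)$ primal and dual matrices $Z$ and $W$, by understanding precisely what is lost when we pass to the submatrices obtained by deleting the 0th row and column.

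First, I would exploit the primal constraints $z_{i0}=z_{0i}=0$ for all $i\in V$. These force $Z$ to have the block structure
$$Z=\begin{pmatrix} z_{00} & 0^T \\ 0 & X \end{pmatrix},$$
so $\text{rank}(Z)=\text{rank}(X)+\mathbf{1}[z_{00}\neq 0]$. The next step is to argue that at any optimal primal solution the scalar $z_{00}$ is in fact strictly positive. This follows by comparing to the reference solution: for a graph with at least one edge and chromatic number $k\geq 2$, the reference solution is feasible with $\alpha=-1/(k-1)<0$, so the optimal value satisfies $\alpha^*\leq -1/(k-1)<0$, and hence $z_{00}^*=-\alpha^*>0$. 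Therefore $\text{rank}(Z)=\text{rank}(X)+1$.

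On the dual side, $S=(w_{ij})_{i,j\in V}$ is a principal submatrix of $W$ (delete the 0th row and column), so $\text{rank}(S)\leq \text{rank}(W)$ for the trivial reason that the rank of any submatrix is bounded by the rank of the ambient matrix. Now applying Fact \ref{fact:sdp-dual-rank} to the SDP pair (SVCN-P), (SVCN-D) with ambient dimension $\ell=n+1$ yields $\text{rank}(Z)+\text{rank}(W)\leq n+1$. Combining,
$$\text{rank}(X)+\text{rank}(S)\;\leq\;(\text{rank}(Z)-1)+\text{rank}(W)\;\leq\;(n+1)-1\;=\;n,$$
which is the desired inequality.

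The only step requiring any care is confirming $z_{00}>0$ at optimality, since the block-diagonal argument breaks down if $z_{00}=0$; I expect to handle this in one line by citing the reference solution. Everything else is a direct translation between the full $(n+1)\times(n+1)$ SDP matrices and the $n\times n$ submatrices of interest.
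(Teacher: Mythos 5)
Your proof is correct and uses essentially the same ingredients as the paper's: the constraints $z_{i0}=z_{0i}=0$ together with $z_{00}>0$ give $\text{rank}(Z)=\text{rank}(X)+1$, the submatrix bound gives $\text{rank}(S)\leq\text{rank}(W)$, and Fact \ref{fact:sdp-dual-rank} on the full $(n+1)\times(n+1)$ matrices finishes it. Your version is in fact slightly cleaner than the paper's two-directional phrasing, and you are right to flag $z_{00}>0$ explicitly (which, as you note, needs the graph to have an edge so the optimum $\alpha^*$ is negative); the paper invokes the same positivity but less carefully.
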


\begin{proof}
If for optimal dual solution $S$, the submatrix $S=(s_{ij})_{i \in V}$ has rank at least $n-k$, then $S$ has rank at least $n-k$.  Then by Fact \ref{fact:sdp-dual-rank}, any optimal primal solution $W$ to (SVCN-P) must have rank at most $(n+1) - (n-k) = k+1$.  By Observation \ref{obs:dim}, the dimension of the corresponding vectors $v$ of the matrix $W$ must be at most $k+1$.  But we note that by the condition that $w_{i0} = v_i \cdot v_0 = 0$ for all $i \in V$, it must be the case that all vectors $v_i$ for $i \in V$ are orthogonal to $v_0$, so that the vectors $v_i$ for $i \in V$ lie in dimension at most $k$.  Then by Observation \ref{obs:dim} the rank of $X$ is at most $k$, giving the desired inequality.  

Similarly, if the rank of $X$ is at least $k$, then because $w_{00}$ is positive, the rank of $Z$ must be at least $k+1$.  Then by Fact \ref{fact:sdp-dual-rank}, the rank of $W$ must be at most $(n+1)-(k+1)=n-k$, so that the rank of $S$ is at most $n-k$.
\end{proof}

Because the values of $Z$ and $W$ are determined by the submatrices $X$ and $S$, we will for the rest of the section refer to primal solutions $X$ and dual solutions $S$.

Our main result for this section is about graphs that are {\em $k$-trees}.

\begin{definition}
A $(k-1)$-tree with $n$ vertices is an undirected graph constructed by beginning with the complete graph on $k$ vertices and repeatedly adding vertices in such a way that each new vertex, $v$, has $k-1$ neighbors that, together with $v$, form a $k$-clique.
\end{definition} 

An easy inductive argument shows that these graphs are $k$-colorable.  Also, $(k-1)$-trees are known to be uniquely $k$-colorable, where uniquely colorable means every coloring produces that same vertex partitioning. Once $k$ colors are assigned to the initial complete graph with $k$ vertices, the color of each new vertex is uniquely determined by its $k-1$ neighbors. This partitioning into color classes is unique up to permuting the colors. Note that by construction, a $(k-1)$-tree contains a $K_{k}.$ 

Recall that Karger, Motwani, and Sudan \cite{KMS98} show that the solution to (SVCN) is $-1/(\vartheta(\bar{G}) - 1)$ where $\vartheta(\bar{G})$ is the Lov\'asz theta function. Lov\'asz \cite{Lovasz79} proved that $\omega(G)\leq \vartheta(\bar{G}) \leq \chi(G)$ where $\omega(G)$ and $\chi(G)$ are the clique and chromatic numbers of $G$ respectively. In particular, if a graph is $c$-colorable, the optimal solution to this vector program is at most $-1/(c-1)$. Note that as previously remarked, $(k-1)$-trees contain $K_{k}$ cliques and are $k$-colorable. As a result, the optimal value of (SVCN) for a $(k-1)$-tree will be exactly $-1/(k-1)$.

Our goal is to show there is a feasible solution to the dual of (SVCN) with high rank. In particular, given a $(k-1)$-tree with $n$ vertices, we show the existence of a dual solution with rank at least $n-k+1$. This ensures that any primal solution has rank at most $k-1$; we show that the reference solution is the unique optimal primal solution. This is formalized in the following theorem.

\begin{restatable}{theorem}{svcn}\label{result}
Given a $(k-1)$-tree $G$ with $n$ vertices, there is an optimal dual solution $S$ to (SVCN-D) with rank at least $n - k+1$, and thus any optimal primal solution $X$ to (SVCN-P) has rank at most $k-1$.
\end{restatable}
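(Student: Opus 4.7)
The strategy is to exhibit an explicit dual feasible solution for (SVCN-D) of value $-1/(k-1)$ and rank $n-k+1$. Since the reference solution is primal feasible with value $-1/(k-1)$, weak duality then forces both to be optimal, and the preceding lemma yields $\mathrm{rank}(X)\leq n-(n-k+1)=k-1$ for every optimal primal $X$.

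To build the dual, enumerate the defining $k$-cliques of $G$ as $C_0,C_1,\ldots,C_{n-k}$, where $C_0$ is the initial $K_k$ and, for $t\geq 1$, $C_t$ is the $k$-clique formed when the $(k+t)$-th vertex $v_{k+t}$ is attached to its $k-1$ pre-existing clique neighbors. Let $\mathbf{1}_{C_t}\in\R^V$ be the $0/1$ indicator vector of $C_t$ and set
$$
S \;=\; \lambda\sum_{t=0}^{n-k}\mathbf{1}_{C_t}\mathbf{1}_{C_t}^T,\qquad \lambda=\frac{1}{k(k-1)(n-k+1)}.
$$
Extend $S$ to the full $(n+1)\times(n+1)$ dual matrix $W$ by setting $w_{0i}=0$ for $i\in V$ (permitted since the corresponding free dual variables let us zero that row and column) and using the dual equation to define $w_{00}$. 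The matrix $S$ is PSD as a nonnegative combination of rank-one PSD matrices. The sparsity $w_{ij}=0$ on non-edges holds because $S_{ij}$ is proportional to the number of construction cliques containing both $i$ and $j$, while every $C_t$ is a clique of $G$ and so contains no non-adjacent pair. A direct interchange of summation gives $\mathrm{tr}(S)=\lambda\sum_t|C_t|=\lambda k(n-k+1)=1/(k-1)$, so the dual value is $-\mathrm{tr}(S)=-1/(k-1)$ and matches the primal. Likewise $\sum_{i\neq j}w_{ij}=2\lambda\binom{k}{2}(n-k+1)$, yielding $w_{00}=-1+\lambda k(k-1)(n-k+1)=0$, so $W$ is PSD in block-diagonal form $\mathrm{diag}(0,S)$ and dual feasible.

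It remains to compute $\mathrm{rank}(S)$. Writing $S=\lambda MM^T$ with $M$ the matrix whose columns are the $\mathbf{1}_{C_t}$, the rank of $S$ equals the dimension of the span of these indicators. I prove linear independence by a peeling argument driven by construction order: suppose $\sum_t\alpha_t\mathbf{1}_{C_t}=0$; the $v_n$-coordinate involves only $C_{n-k}$ because $v_n$ was introduced last and lies in no earlier construction clique, so $\alpha_{n-k}=0$; iterating on $v_{n-1},\ldots,v_{k+1}$ kills $\alpha_{n-k-1},\ldots,\alpha_1$ in turn, and the residual $\alpha_0\mathbf{1}_{C_0}=0$ forces $\alpha_0=0$. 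Hence $\mathrm{rank}(S)=n-k+1$, and the preceding lemma gives the claimed primal rank bound. The main delicate ingredient is this peeling, which rests squarely on the characteristic property of $(k-1)$-trees that each non-initial vertex is added as the apex of exactly one fresh construction clique, so the order of additions exposes a lower-triangular structure among the indicator vectors.
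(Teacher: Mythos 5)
Your proof is correct, and the dual matrix you construct is in fact \emph{identical} to the paper's: the paper defines $S(G)$ entrywise via $S_{ii}\propto |N(i)|-(k-2)$ and $S_{ij}\propto |N(i)\cap N(j)|-(k-3)$, and its inductive PSD and rank proofs proceed by peeling off exactly the rank-one term $\mathbf{1}_{C_t}\mathbf{1}_{C_t}^T$ at each step, so unrolling that induction recovers your closed form $\lambda\sum_t \mathbf{1}_{C_t}\mathbf{1}_{C_t}^T$. What differs is the route of verification, and your factorized presentation buys real simplifications. The paper needs a separate counting lemma for $|E(G)|$ and the number of triangles in a $(k-1)$-tree to check the constraint $\sum_{i\neq j}w_{ij}\geq 1$ and the objective value $-1/(k-1)$; your double counting $\sum_t|C_t|=k(n-k+1)$ and $\sum_t|C_t|(|C_t|-1)=k(k-1)(n-k+1)$ replaces both computations in one line. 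Likewise, the paper bounds the rank by an inductive kernel argument ($\ker S'(G)=\ker T\cap\ker v_{n+1}v_{n+1}^T\subsetneq\ker T$), whereas your peeling argument shows the $n-k+1$ clique indicators are linearly independent outright, which gives the rank exactly (not merely a lower bound) without induction; the triangular structure you exploit (each vertex $v_{k+t}$ lies in no construction clique earlier than $C_t$) is sound. The remaining steps --- sparsity on non-edges because construction cliques are cliques, the extension to the $(n+1)\times(n+1)$ matrix $W$ with $w_{00}=0$, and the appeal to the rank lemma to conclude $\mathrm{rank}(X)\leq k-1$ --- all match the paper's logic.
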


We subsequently prove that the reference solution is indeed the unique optimal solution in this case.

\begin{restatable}{theorem}{svcncolor}\label{thm:svcn-color} The reference solution is the unique optimal primal solution (up to rotation) for a $(k-1)$-tree $G =(V,E)$.
\end{restatable}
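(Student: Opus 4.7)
The plan is to combine the rank bound of Theorem~\ref{result} with an inductive argument that follows the construction of the $(k-1)$-tree. By Theorem~\ref{result} any optimal primal $X$ has rank at most $k-1$, so by Observation~\ref{obs:dim} its vectors $v_i$ can be taken to lie in $\R^{k-1}$. After this reduction it suffices to show, working inside $\R^{k-1}$, that the primal constraints force every $v_i$ to equal the corresponding reference vector once a single global orthogonal transformation has been chosen. Let $r_1,\ldots,r_k \in \R^{k-1}$ denote the reference vectors (so $r_j \cdot r_j = 1$ and $r_j \cdot r_{j'} = -1/(k-1)$ for $j \neq j'$), and let $c$ be any $k$-coloring of $G$; unique $k$-colorability of $(k-1)$-trees guarantees that $c$ is determined up to a permutation of color labels once we fix the coloring on the initial $K_k$.

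The first step handles the initial $K_k$. Its $k$ vectors are unit vectors with pairwise inner product $-1/(k-1)$, so their Gram matrix is $\tfrac{k}{k-1}I_k - \tfrac{1}{k-1}J_k$, whose eigenvalues are $0$ with multiplicity one and $\tfrac{k}{k-1}$ with multiplicity $k-1$. Hence these vectors span all of $\R^{k-1}$ and form a regular $(k-1)$-simplex, and any two such simplices differ by an orthogonal transformation. Applying such a transformation globally, we may assume $v_i = r_{c(i)}$ for every vertex $i$ of the initial clique.

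The second step is induction on the order in which the remaining vertices are added. Suppose $v_u = r_{c(u)}$ for every previously added vertex $u$, and let $w$ be the next vertex, with neighbors $u_1,\ldots,u_{k-1}$ that form a $K_{k-1}$. The colors $c(u_1),\ldots,c(u_{k-1})$ are pairwise distinct, so the vectors $r_{c(u_1)},\ldots,r_{c(u_{k-1})}$ are linearly independent in $\R^{k-1}$ (the only linear dependence among the $r_j$ is $\sum_j r_j = 0$, so any proper subset is independent). The $k-1$ equations $v_w \cdot r_{c(u_j)} = -1/(k-1)$ therefore determine $v_w$ uniquely in $\R^{k-1}$, and $r_{c(w)}$, where $c(w)$ is the unique missing color, satisfies all of them and also has unit norm. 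Thus $v_w = r_{c(w)}$ and the induction closes, proving that the only optimal primal solution is the reference solution up to the rotation fixed in the base case.

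I expect the main obstacle to be the bookkeeping of the base case: verifying the rank of the initial Gram matrix cleanly and committing to a single global rotation that is compatible with the inductive extension. After that the result is essentially forced by the rigidity of the regular simplex together with the inductive definition of a $(k-1)$-tree, and no new ideas beyond Theorem~\ref{result} and unique $k$-colorability are required.
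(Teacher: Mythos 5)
Your proposal is correct and follows essentially the same route as the paper: establish rigidity of the vectors on the initial $K_k$, then induct along the vertex-addition order, using the fact that the $k-1$ neighbors' vectors are linearly independent so the $k-1$ inner-product equations pin down the new vector uniquely in $\R^{k-1}$. The only real difference is cosmetic and in your favor: you get linear independence of the simplex vectors directly from the spectrum of the Gram matrix $\tfrac{k}{k-1}I_k-\tfrac{1}{k-1}J_k$, whereas the paper derives it by an explicit contradiction computation on a hypothetical linear dependence.
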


To prove Theorem \ref{result}, we need a number of supporting lemmas.  We begin with the following. 

\begin{lemma} \label{lem:ktreecount}
Let $tri(G)$ denote the number of triangles in a $(k-1)$-tree, $G$. Then, for a $(k-1)$-tree $G$ with $n$ vertices,
\begin{align}
      |E(G)| &= (2n-k)\frac{k-1}{2}  \label{eq:edges} \\
      tri(G) &= \frac{(3n-2k)(k-1)(k-2)}{6}. \label{eq:triangles}
\end{align}
\end{lemma}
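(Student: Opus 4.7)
The plan is to prove both identities simultaneously by induction on the number of vertices $n$, exploiting the recursive construction of a $(k-1)$-tree.

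For the base case I would take $n = k$, so $G = K_k$. Here the counts reduce to standard binomial identities: $|E(K_k)| = \binom{k}{2} = k(k-1)/2$, which matches $(2k-k)(k-1)/2$, and $tri(K_k) = \binom{k}{3} = k(k-1)(k-2)/6$, which matches $(3k-2k)(k-1)(k-2)/6$. So the base case is a direct calculation.

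For the inductive step, I would assume the formulas hold for a $(k-1)$-tree on $n$ vertices and consider an arbitrary $(k-1)$-tree $G'$ on $n+1$ vertices. By the definition of a $(k-1)$-tree, some vertex $v$ was added last, meaning $G' - v$ is itself a $(k-1)$-tree on $n$ vertices and $v$ is joined to a $(k-1)$-clique $N(v)$ in $G' - v$. The contribution of $v$ to the edge count is exactly $k-1$, and the increment matches: $(2n-k)(k-1)/2 + (k-1) = (2(n+1)-k)(k-1)/2$. For the triangle count, every new triangle must contain $v$ (since $G'-v$ is the old graph), and triangles through $v$ correspond bijectively to edges inside the clique $N(v)$, of which there are $\binom{k-1}{2} = (k-1)(k-2)/2$; adding this to $(3n-2k)(k-1)(k-2)/6$ gives $(3(n+1) - 2k)(k-1)(k-2)/6$, as required.

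There is no real obstacle; the only mild subtlety is noting that when we add $v$, the new triangles are exactly those with $v$ as one vertex and an edge of $N(v)$ as the opposite side, which uses the fact that $N(v)$ induces a $(k-1)$-clique in $G'$. The inductive structure of $(k-1)$-trees makes both formulas essentially immediate once the base case is verified.
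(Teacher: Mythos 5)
Your proof is correct and follows essentially the same route as the paper's: induction on $n$ with base case $K_k$ verified via the binomial identities, then adding $k-1$ edges and $\binom{k-1}{2}$ triangles at each step. The observation that new triangles correspond to edges of the $(k-1)$-clique $N(v)$ is exactly the counting the paper uses.
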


\begin{proof}
We first prove \eqref{eq:edges} by induction. The smallest $(k-1)$-tree is the complete graph with $n = k$ vertices. This graph has ${k \choose 2} = \frac{k(k-1)}{2}$ edges. We also have $(2n-k)\frac{k-1}{2} = (2k-k)\frac{k-1}{2} = \frac{(k-1)k}{2}$. Assume the claim is true for all $(k-1)$-trees with at most $n$ vertices. If we add an $n+1st$ vertex, we are also adding $k-1$ new edges. Our new graph will have $\frac{(2n-k)(k-1)}{2} + (k-1) = \frac{(2n-k)(k-1) + 2(k-1)}{2} = \frac{(2(n+1)-k)(k-1)}{2}$ edges, as desired.\\

To count triangles, we begin with the complete graph on $k$ vertices again. This graph has ${k \choose 3} = \frac{k(k-1)(k-2)}{6}$ triangles. We also have $\frac{(3n-2k)(k-1)(k-2)}{6} = \frac{(3k-2k)(k-1)(k-2)}{6} = \frac{k(k-1)(k-2)}{6}.$  Assume the claim is true for all $(k-1)$-trees with at most $n$ vertices. If we add an $n+1st$ vertex, we are also adding ${k-1 \choose 2}$ new triangles. Then this new graph has $\frac{(3n-2k)(k-1)(k-2)}{6} + \frac{(k-1)(k-2)}{2} = \frac{(3n-2k)(k-1)(k-2) + 3(k-1)(k-2)}{6} = \frac{(3(n+1)-2k)(k-1)(k-2)}{6}$ triangles, as desired.
\end{proof}

Consider a $(k-1)$-tree $G$ with $n$ vertices. For $v\in V$ we denote the neighborhood of $v$ by $N(v) = \{u:(u,v)\in E\}.$ We define the following matrix $S(G)$ which may be referred to as $S$ if $G$ is clear from context.

\[ S(G)_{ij}= \begin{cases} 
      \dfrac{|N(i)| - (k-2)}{k(k-1)(n-k+1)} & i = j \\
       & \\
      \dfrac{|N(i) \cap N(j)| - (k-3)}{k(k-1)(n-k+1)} & (i,j) \in E  \\
      & \\
      0 & (i,j) \notin E, i \neq j.
   \end{cases}
\]

We will show that $S(G)$ is an optimal dual solution with rank $n-k+1$. First, we show $S(G)$ is a feasible solution with help from the following lemma. 

\begin{lemma}\label{PSD}
For a $(k-1)$-tree $G$ with $n$ vertices, $S(G)$ is positive semidefinite.
\end{lemma}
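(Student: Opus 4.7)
The plan is to exhibit $S(G)$ as a positive scalar multiple of $\sum_K \mathbf{1}_K \mathbf{1}_K^T$, where the sum ranges over all $k$-cliques $K$ of $G$ and $\mathbf{1}_K \in \R^n$ is the indicator vector of $V(K)$. Each summand is rank-one PSD, so this immediately gives $S(G) \succeq 0$.

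To carry this out, note that the $(i,j)$-entry of $M := \sum_K \mathbf{1}_K \mathbf{1}_K^T$ equals the number of $k$-cliques of $G$ that contain both $i$ and $j$ (so in particular it is $0$ whenever $(i,j)\notin E$ and $i \neq j$, since a $k$-clique is itself a clique). Comparing with the definition of $S(G)$, it suffices to establish the two combinatorial identities
\begin{align*}
    \#\{k\text{-cliques } K : i \in K\} &= |N(i)| - (k-2), \\
    \#\{k\text{-cliques } K : i,j \in K\} &= |N(i) \cap N(j)| - (k-3) \quad \text{for } (i,j) \in E,
\end{align*}
after which dividing by $k(k-1)(n-k+1)$ recovers $S(G)$ exactly. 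As a consistency check, summing the first identity over $i \in V$ and the second over $(i,j) \in E$ gives $k(n-k+1)$ and $\binom{k}{2}(n-k+1)$ respectively; these match $k$ and $\binom{k}{2}$ times the total $k$-clique count $n-k+1$, and they agree with the edge and triangle totals from Lemma~\ref{lem:ktreecount}.

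I would prove both identities simultaneously by induction on $n$. The base case $n = k$ is $G = K_k$, which has one $k$-clique and where both identities evaluate to $1$ by direct substitution. For the inductive step, pass from a $(k-1)$-tree $G$ on $n$ vertices to $G'$ on $n+1$ vertices by attaching a new simplicial vertex $v$ to a $(k-1)$-clique $\{v_1,\ldots,v_{k-1}\}$ of $G$; the unique new $k$-clique is $\{v, v_1, \ldots, v_{k-1}\}$. One then checks by cases that both sides of each identity move in lockstep: (i) for $v$ itself, $|N(v)| - (k-2) = 1$ matches the one $k$-clique containing $v$; (ii) for each $v_i$ and each edge inside $\{v_1,\ldots,v_{k-1}\}$, both the clique count and the relevant neighborhood statistic increase by exactly one; (iii) for each new edge $(v, v_i)$ the identity holds directly from the common-neighbor structure inside the new clique; and (iv) any vertex or edge not incident to the new clique has unchanged neighborhoods and lies in no new $k$-clique, so the inductive hypothesis applies verbatim.

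The main obstacle is really just recognizing the correct PSD decomposition — the slightly mysterious constants $k-2$ and $k-3$ appearing in $S(G)$ are explained exactly by the relationship between degrees, codegrees, and $k$-clique incidences in a $(k-1)$-tree. Once the identity $S(G) = \frac{1}{k(k-1)(n-k+1)} \sum_K \mathbf{1}_K \mathbf{1}_K^T$ is in hand, positive semidefiniteness is immediate, and everything else is a routine inductive bookkeeping argument.
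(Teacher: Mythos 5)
Your proposal is correct and is essentially the paper's own argument in closed form: the paper proves PSD-ness by induction, writing $S'(G) = T + v_{n+1}v_{n+1}^T$ where $v_{n+1}$ is the indicator vector of the single new $k$-clique created at each step and the base case is $\mathbf{1}_{K_k}\mathbf{1}_{K_k}^T$, which when unrolled is exactly your identity $k(k-1)(n-k+1)\,S(G) = \sum_K \mathbf{1}_K\mathbf{1}_K^T$ over all $n-k+1$ of the $k$-cliques. Your combinatorial identities relating clique incidences to $|N(i)|-(k-2)$ and $|N(i)\cap N(j)|-(k-3)$ check out and make explicit what the paper's induction verifies implicitly, so the two proofs differ only in packaging.
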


\begin{proof}
Observe that it suffices to show $S'(G) = k(k-1)(n-k+1)S(G)$ is positive semidefinite (PSD) since $k(k-1)(n-k+1) > 0$ for $n\geq k$. We proceed by induction. First consider $(k-1)$-trees with $k$ vertices. There is only one, $G = K_{k}$. Furthermore, $S'(K_{k})$ is equal to the all-ones matrix which has eigenvalues $k$ and 0 with multiplicity $k-1$ and thus is PSD.  

Now assume there is some integer $n$ such that for every $(k-1)$-tree, $G$, with at most $n$ vertices, $S'(G)$ is PSD.
Consider a $(k-1)$-tree $G$ with $n+1$ vertices. Since it is a $(k-1)$-tree, it can be constructed from some smaller $(k-1)$-tree $G'$ with $n$ vertices by adding a vertex $v$ and $(k-1)$ edges that form a $k$ clique with the $k-1$ neighbors. By assumption, $S'(G')$ is PSD. Let $I$ be the set of indices of the $k-1$ neighbors of $v$. Then we observe that 
$S'(G) = T + v_{n+1}v_{n+1}^T$ where 
$$ T = \begin{bmatrix}
   & &  &\vline & 0\\
    & S'(G') & &  \vline & \vdots \\
    & & &  \vline &  0\\
\hline
  0 & \cdots & 0 & \vline & 0
\end{bmatrix}$$
  and $$v_{n+1}(i) = \begin{cases} 1 & i\in I \cup \{n+1\} \\
  0 & \text{otherwise}
  \end{cases}.$$ \\
  Then $x^TS'(G)x = x^TTx + x^Tv_{n+1}v_{n+1}^Tx \geq x^Tv_{n+1}v_{n+1}^Tx = (v_{n+1}^Tx)^2 \geq 0$ where the first inequality is due to $T$ being PSD since $S'(G')$ is PSD.
\end{proof}

\begin{lemma}\label{feasible}
For a $(k-1)$-tree $G$ with $n$ vertices, $S(G)$ is a feasible dual slack matrix.
\end{lemma}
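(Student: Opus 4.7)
The plan is to construct an $(n+1)\times(n+1)$ symmetric matrix $W$ whose $V\times V$ principal submatrix is $S(G)$, verify that $W$ meets the constraints of (SVCN-D), and pin down the required value of $w_{00}$. The natural extension sets $w_{0i} = w_{i0} = 0$ for every $i \in V$; the equality constraint of (SVCN-D) then forces $w_{00}$. The proof splits into checking the non-edge conditions, computing the forced value of $w_{00}$, and verifying PSD-ness of $W$.

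The non-edge conditions $w_{ij} = 0$ for $(i,j) \notin E$ are immediate: for $i,j \in V$ this is the definition of $S(G)$, and for pairs involving the $0$ index it holds by our extension. Because the $0$th row and column of $W$ are zero, the sum in the equality constraint collapses to
\[
\sum_{i \neq j} w_{ij} \;=\; \sum_{\substack{i,j \in V \\ i \neq j}} S(G)_{ij} \;=\; 2\sum_{(i,j) \in E} S(G)_{ij},
\]
since $W$ is symmetric and non-edges contribute zero. Expanding via the definition of $S(G)$,
\[
2 \sum_{(i,j) \in E} S(G)_{ij} \;=\; \frac{2}{k(k-1)(n-k+1)}\left(\sum_{(i,j)\in E} |N(i) \cap N(j)| \;-\; (k-3)\,|E|\right).
\]

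The key combinatorial identity is $\sum_{(i,j)\in E}|N(i) \cap N(j)| = 3\,tri(G)$: each triangle is charged once at each of its three edges. Plugging in the closed forms for $|E|$ and $tri(G)$ from Lemma~\ref{lem:ktreecount}, everything reduces to the algebraic identity $(3n-2k)(k-2) - (k-3)(2n-k) = k(n-k+1)$, after which the factor $k(k-1)(n-k+1)$ cancels exactly and the whole sum equals $1$. Hence the constraint pins $w_{00} = -1 + 1 = 0$.

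With $w_{00} = 0$ and the entire $0$th row and column zero, $W$ is block-diagonal as $\mathrm{diag}(0, S(G))$, so $W \succeq 0$ is equivalent to $S(G) \succeq 0$, which is Lemma~\ref{PSD}. The one substantive step is the combinatorial/algebraic identity yielding the sum $1$; in hindsight, the normalization $k(k-1)(n-k+1)$ in the definition of $S(G)$ was evidently engineered to produce exactly this cancellation, and everything else is bookkeeping.
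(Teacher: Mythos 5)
Your proposal is correct and follows essentially the same route as the paper: positive semidefiniteness is delegated to Lemma~\ref{PSD}, the non-edge entries vanish by construction, and the constraint $\sum_{i\neq j}S_{ij}=1$ is verified via the identity $\sum_{(i,j)\in E}|N(i)\cap N(j)|=3\,tri(G)$ together with the counts from Lemma~\ref{lem:ktreecount}. The only difference is that you make the lift to the $(n+1)\times(n+1)$ matrix $W$ and the forced value $w_{00}=0$ explicit, whereas the paper handles this by its earlier convention of working directly with the $V\times V$ submatrix $S$.
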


\begin{proof}
\cref{PSD} shows that $S(G)$ is PSD. To complete this claim, we must show that the dual constraints are satisfied. That $S(G)_{ij} = 0$ for $(i,j) \notin E$ is clear by construction. The other constraint requires $\sum_{i\neq j} s_{ij} \geq 1$. For $S(G)$,
$$ 
\begin{aligned}
\sum_{i\neq j} S_{ij} &= 2\sum_{(i,j) \in E} \dfrac{|N(i) \cap N(j)| - (k-3)}{k(k-1)(n-k+1)} \\
&= \frac{1}{k(k-1)(n-k+1)}\left[-2(k-3)|E(G)| + 2\sum_{(i,j)\in E} |N(i) \cap N(j)|\right] \\
&=\frac{1}{k(k-1)(n-k+1)}\left[-2(k-3)((2n-k)\frac{k-1}{2}) + 2\sum_{v \in V}\text{(\# of triangles in $G$ containing $v$)}\right] \\
&=\frac{1}{k(k-1)(n-k+1)}\Big[-(k-3)(k-1)(2n-k) + 6tri(G)\Big] \\
&= \dfrac{-(k-3)(k-1)(2n-k) + 6(\frac{(3n-2k)(k-1)(k-2)}{6})}{k(k-1)(n-k+1)} \\
&=\frac{(k-1)((k-2)(3n-2k)-(k-3)(2n-k))}{k(k-1)(n-k+1)}\\
&=\frac{(k-1)(3nk-2k^2-6n+4k-2nk+k^2+6n-3k)}{k(k-1)(n-k+1)}\\
&=\frac{k(k-1)(n-k+1)}{k(k-1)(n-k+1)} = 1
\end{aligned}
$$ where we use both \eqref{eq:edges} and \eqref{eq:triangles} from Lemma \ref{lem:ktreecount}.
\end{proof}

We can now show that $S(G)$ is an optimal dual solution.

\begin{theorem}\label{opt}
For a $(k-1)$-tree $G$ with $n$ vertices, $S(G)$ is an optimal dual solution.
\end{theorem}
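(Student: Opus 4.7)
The plan is to invoke weak duality. We already know from \cref{feasible} that $S(G)$ is feasible for (SVCN-D), and from the introduction that the reference solution is feasible for (SVCN-P) with objective value $\alpha = -1/(k-1)$ (since $G$ contains a $K_k$ and is $k$-colorable). So it suffices to compute the dual objective $-\sum_{i\in V} S(G)_{ii}$ and show it also equals $-1/(k-1)$; weak duality then forces both to be optimal.

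By definition of $S(G)$, the diagonal sum is
\[
\sum_{i\in V} S(G)_{ii} \;=\; \frac{\sum_{i\in V}|N(i)| - n(k-2)}{k(k-1)(n-k+1)} \;=\; \frac{2|E(G)| - n(k-2)}{k(k-1)(n-k+1)},
\]
using the handshake identity $\sum_i |N(i)| = 2|E(G)|$. Next I would substitute the edge count $|E(G)| = (2n-k)(k-1)/2$ from \cref{lem:ktreecount}, so that the numerator becomes $(2n-k)(k-1) - n(k-2) = k(n-k+1)$ after a short algebraic simplification. This collapses the whole fraction to $1/(k-1)$, giving $-\sum_i S(G)_{ii} = -1/(k-1)$, matching the primal value of the reference solution.

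There is no serious obstacle here: all the structural work (positive semidefiniteness, and the constraint $\sum_{i\neq j} s_{ij} \geq 1$) has already been done in \cref{PSD} and \cref{feasible}, and the only remaining ingredient is the diagonal-sum computation above, which is purely arithmetic once the $(k-1)$-tree edge-count formula is in hand. The mild subtlety is remembering that we are identifying $S$ with the $V \times V$ submatrix of the full $(n+1) \times (n+1)$ dual slack, but since all off-diagonal entries involving index $0$ are controlled by the constraint $w_{i0}=0$ and $w_{00}$ is determined by the remaining dual variables, the objective $-\sum_{i\in V} w_{ii}$ depends only on the diagonal of $S$ itself, so the computation above is legitimate.
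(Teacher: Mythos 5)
Your proposal is correct and matches the paper's argument: both reduce the claim to the arithmetic identity $-\sum_{i\in V} S(G)_{ii} = -1/(k-1)$ via the handshake lemma and the edge count $|E(G)| = (2n-k)(k-1)/2$ from the $(k-1)$-tree lemma, and then conclude optimality by weak duality against the value $-1/(k-1)$ (the paper cites this as the known optimal primal value, while you exhibit the reference solution explicitly, which is the same certificate). Your closing remark about the objective depending only on the $V\times V$ diagonal is also consistent with how the paper handles the $(n+1)\times(n+1)$ formulation.
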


\begin{proof}
We remarked earlier that the optimal primal solution for a $(k-1)$-tree is $-1/(k-1)$. Thus for $S(G)$ to be an optimal dual solution, it suffices to show that $-\sum_{i}S_{ii} = -1/(k-1)$. Again using \eqref{eq:edges} from Lemma \ref{lem:ktreecount}, we have
$$
\begin{aligned}
-\sum_{i=1}^nS_{ii} &= -\sum_{i=1}^n \frac{|N(i)|-(k-2)}{k(k-1)(n-k+1)} \\
&= -\frac{1}{k(k-1)(n-k+1)}\left[-(k-2)n + \sum_{i=1}^n |N(i)|\right] \\
&= -\dfrac{-(k-2)n + 2|E|}{k(k-1)(n-k+1)}\\
&= -\dfrac{-(k-2)n+((2n-k)(k-1))}{k(k-1)(n-k+1)}\\
&= -\dfrac{-nk+2n+2nk-2n-k^2+k}{k(k-1)(n-k+1)} = -1/(k-1).
\end{aligned}
$$
\end{proof}

Finally, we want to show that for a $(k-1)$-tree $G$ with $n$ vertices, $S(G)$ has rank at least $n-k+1$. This guarantees that any primal solution has rank at most $k-1$. 

\begin{theorem}\label{rank}
For a $(k-1)$-tree $G$ with $n$ vertices, $S(G)$ has rank at least $n-k+1$.
\end{theorem}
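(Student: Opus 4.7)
My plan is to prove the rank bound by induction on $n$, reusing the rank-one update decomposition already established in the proof of Lemma \ref{PSD}. Since $S(G)$ and $S'(G) = k(k-1)(n-k+1) S(G)$ have the same rank, it suffices to prove the bound for $S'(G)$.

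For the base case $n = k$, the only $(k-1)$-tree is $K_k$, and $S'(K_k)$ is the all-ones matrix, which has rank $1 = n - k + 1$. For the inductive step, suppose $G$ is a $(k-1)$-tree on $n+1$ vertices obtained from a $(k-1)$-tree $G'$ on $n$ vertices by attaching a new vertex $v$ to a $(k-1)$-clique $I$. The computation in the proof of Lemma \ref{PSD} gives the exact identity
\[
S'(G) \;=\; T + v_{n+1} v_{n+1}^T,
\]
where $T$ is $S'(G')$ padded with a zero row and column indexed by $v$, and $v_{n+1}$ is the $0/1$ indicator vector of $I \cup \{v\}$. By the inductive hypothesis, $\text{rank}(T) = \text{rank}(S'(G')) \geq n - k + 1$, and my goal is to show this rank goes up by exactly one after the rank-one update.

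For this I will use the standard fact that if $A$ and $B$ are both positive semidefinite, then $\text{null}(A+B) = \text{null}(A) \cap \text{null}(B)$ (since each quadratic form is nonnegative), which dualizes to $\text{range}(A+B) = \text{range}(A) + \text{range}(B)$. Applying this to $T \succeq 0$ and $v_{n+1}v_{n+1}^T \succeq 0$ gives
\[
\text{rank}(S'(G)) \;=\; \dim\bigl(\text{range}(T) + \text{span}(v_{n+1})\bigr).
\]
The key observation is that every vector in $\text{range}(T)$ has a $0$ in the coordinate indexed by the new vertex $v$, because the row and column of $T$ corresponding to $v$ are identically zero. The vector $v_{n+1}$, however, has a $1$ in this coordinate, so $v_{n+1} \notin \text{range}(T)$. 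Therefore $\text{rank}(S'(G)) = \text{rank}(T) + 1 \geq (n - k + 1) + 1 = (n+1) - k + 1$, completing the induction.

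There is really no hard step here: the verification that $v_{n+1}$ is linearly independent from $\text{range}(T)$ is the entire content, and it follows immediately from the padding structure used in Lemma \ref{PSD}. The only thing to be slightly careful about is invoking the PSD identity for ranges (equivalently, kernels) of a sum, rather than the weaker general inequality $\text{rank}(A+B) \leq \text{rank}(A) + \text{rank}(B)$, which would go the wrong way for a lower bound.
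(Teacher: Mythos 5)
Your proof is correct and is essentially the same as the paper's: both reduce to $S'(G)$, use the identical decomposition $S'(G)=T+v_{n+1}v_{n+1}^T$ from Lemma \ref{PSD}, and exploit the PSD fact that the kernel of a sum is the intersection of kernels. The only cosmetic difference is that you phrase the strict increase in rank via ranges ($v_{n+1}\notin\mathrm{range}(T)$), whereas the paper phrases the equivalent strict decrease of the kernel by exhibiting $(0,\dots,0,1)\in\ker(T)\setminus\ker(v_{n+1}v_{n+1}^T)$; these are orthogonal-complement duals of the same observation.
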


\begin{proof}
It again suffices to show the claim is true for $S'(G) = k(k-1)(n-k+1)S(G)$. Proceeding by induction, for $n=k$ we have $rank(S'(G)) = rank(S'(K_{k})) = 1 = k-(k-1)$ with 
$S'(K_{k})$ equal to the all-ones matrix.  Assuming the claim is true for all $(k-1)$-trees with at most $n$ vertices, we consider a $(k-1)$-tree $G$ with $n+1$ vertices. We again use the decomposition $S'(G) = T + v_{n+1}v_{n+1}^T$ where 
$$ T = \begin{bmatrix}
   & &  &\vline & 0\\
    & S'(G') & &  \vline & \vdots \\
    & & &  \vline &  0\\
\hline
  0 & \cdots & 0 & \vline & 0
\end{bmatrix},
  v_{n+1}(i) = \begin{cases} 1 & i \in I \cup \{n+1\} \\
  0 & \text{otherwise}
  \end{cases},$$
  and $G'$ is a $(k-1)$-tree with $n$ vertices acquired by removing vertex $n+1$ with exactly $k-1$ neighbors, $i\in I$, from $G$. Note $dim(ker(T)) = dim(ker(S'(G')) + 1 \leq k$ by assumption. Now assume $x \in ker(S'(G))$. Then 
  $$0 = x^TS'(G)x = x^TTx + x^Tv_{n+1}v_{n+1}^Tx.$$ Since $T$ and $v_{n+1}v_{n+1}^T$ are both PSD, this implies $x^TTx = 0$ and $x^Tv_{n+1}v_{n+1}^Tx = 0$. Therefore $ker(S'(G)) = ker(T) \cap ker(v_{n+1}v_{n+1}^T)$. However, note that $x = (0, \cdots, 0, 1) \in ker(T)$, but $x \notin ker(v_{n+1}v_{n+1}^T)$. Then 
  $$ker(S'(G)) = ker(T) \cap ker(v_{n+1}v_{n+1}^T) \subsetneq ker(T).$$ This implies $dim(ker(S'(G)) < dim(ker(T)) \leq k$, so $rank(S'(G)) \geq (n+1)-k+1$.
\end{proof}

\svcn*
\begin{proof}
\cref{result} follows as an immediate consequence of \cref{feasible}, \cref{opt}, and \cref{rank}.
\end{proof}

We now turn to showing that the reference solution is indeed the optimal solution in the case of $(k-1)$-trees.

\svcncolor*

\begin{proof}
We already know that the reference solution given by $u_1, u_2, \dots, u_{k-1}, u_{k} = - \sum_{i=1}^{k-1} u_i$ where $\{u_i\}_{i=1}^k$ are linearly independent is an optimal rank $k-1$ solution to the primal SDP. Furthermore, we know from \cref{result} that every primal solution has rank at most $k-1$. To prove the claim, we must show the reference solution is the only optimal solution (up to rotation). 
Consider the smallest $(k-1)$-tree, the complete graph on $k$ vertices, $K_{k}$. Choose $k-1$ of the vertices and label the corresponding vectors as $v_1, \dots, v_{k-1}$. Then we show these $k-1$ vectors are linearly independent. Assume they are not and let $j \leq k-1$ be the smallest index such that $v_j = \sum_{r=1}^{j-1} \alpha_r v_r$ for some $\alpha_1, \dots, \alpha_{j-1}$. By the constraints of the vector program, we know $v_i \cdot v_j = -1/(k-1)$ for $i = 1, \cdots, j-1$. Furthermore, for any such $i$, 
$$ -1/(k-1) = v_i \cdot v_j = v_i \cdot \left(\sum_{i=r}^{j-1} \alpha_r v_r\right) = \alpha_i - \frac{1}{k-1}\left(\sum_{r \neq i: r \in \{1,\dots,j-1\}}\alpha_r\right).$$ If we subtract two of these equations deriving from $i=i_1, i_2$, we see that $$ 0 = (\alpha_{i_1} - \frac{1}{k-1}\alpha_{i_2}) - (\alpha_{i_2} - \frac{1}{k-1}\alpha_{i_1}).$$ This implies $\alpha_{i_1} = \alpha_{i_2}$ and thus the coefficient $\alpha_i$ is the same for every $i \in \{1, \dots, j-1\}$. Relabeling this coefficient as $\alpha$, we can write $v_j = \alpha\sum_{r=1}^{j-1}v_r.$ Considering $-1/(k-1) = v_i \cdot v_j$ again, we see $-1/(k-1) = \alpha(1-\frac{j-2}{k-1}),$ so $\alpha = -\frac{1}{k-j+1}.$ However, $1 = v_j \cdot v_j$ tells us 
$$
\begin{aligned}
1 &= \alpha^2 (\sum_{r=1}^{j-1} v_r) \cdot (\sum_{r=1}^{j-1}v_r) \\ &= \alpha^2 \sum_{r=1}^{j-1}\left(v_r \cdot (\sum_{p=1}^{j-1}v_p) \right)\\
&= \alpha^2 \sum_{r=1}^{j-1}  (1 - \frac{j-2}{k-1})\\ &= \alpha^2((j-1)(1-\frac{j-2}{k-1})) \\ &= \frac{1}{(k-j+1)^2}\frac{(j-1)(k-j+1)}{k-1}\\ & = \frac{j-1}{(k-1)(k-j+1)} .
\end{aligned}
$$ 
This means $j = \frac{k^2}{k} = k$.

Now we consider the vector $v_{k}$ assigned to the final vertex of the $(k-1)$-tree. Again, we have $v_i \cdot v_{k} = -1/(k-1)$ for $i = 1, \dots, k-1$. For a given $i$, the set of solutions to $v_i \cdot v_{k}$ is represented by a hyperplane, $H_i$ in the $(k-1)$-dimensional vector space spanned by $\{v_1, \dots, v_{k-1}\}$. Therefore, a satisfying vector $v_{k}$ must lie in $H_1 \cap \dots \cap H_{k-1}$. Because $v_1, \dots, v_{k-1}$ are linearly independent, $dim(H_1 \cap \dots \cap H_{k-1}) = 0$. Thus there is a unique vector that satisfies the given equations. Since $v = -\sum_{i=1}^{k-1} v_i$ satisfies all $k-1$ equations, we find that $v_{k} = v$. Therefore, this solution is exactly the reference solution.

Now, assume the claim is true for all $(k-1)$-trees on $n$ vertices. Consider a $(k-1)$-tree, $G$, with $n+1$ vertices and a rank $k-1$ primal solution. $G$ is constructed from a $(k-1)$-tree, $G'$, with $n$ vertices by attaching an additional vertex $v$ with edges to all vertices in a $(k-1)$-clique, $K$, of $G'$. Therefore the primal solution to $G$ is also a rank $k-1$ primal solution to $G'$. Then by induction, we know this must be the reference solution $v_1, \dots, v_{k-1}, v_{k} = -\sum_{i=1}^{k-1} v_i$. By symmetry, we may assume that the vertices in $K$ are assigned to $v_1, \dots, v_{k-1}$. Finally, if $v_v$ is the vector assigned to $v$, then by the same hyperplane argument, we find that $v_v = v_{k}$. Therefore the primal solution to $G$ is again the reference solution.
\end{proof}

\cref{thm:svcn-color} shows that we can partition the vertices of a $(k-1)$-tree into $k$ sets with each set associated to a different vector assigned in the low rank primal solution. Since vertices $u,v$ are only in the same set in the partition if they were assigned the same vector in the primal solution, it is not possible for neighbors to be in the same set. We can then produce a valid coloring of the vertices by associating one color to each set in the partition. 


We now turn to characterizing cases in which we cannot find dual solutions of sufficiently high rank by looking at potential solutions of vector colorings for graphs without unique colorings. In particular, we restrict our attention to graphs that have multiple distinct $k$-colorings and contain a $k$-clique. These assumptions provide information about the optimal objective function values.

\begin{theorem}\label{multiple}
Let $G$ be a graph with $n$ vertices, multiple distinct $k$-colorings, and a $k$-clique. There exists a primal solution to the strict vector chromatic number program for $G$ with rank greater than $k-1$, and thus by Fact \ref{fact:sdp-dual-rank} the rank of any optimal dual solution must be less than $n-k+1$.
\end{theorem}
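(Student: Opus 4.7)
The strategy follows the hint given in the introduction: build an optimal primal solution of rank strictly greater than $k-1$ by averaging two reference solutions arising from two distinct $k$-colorings, and then invoke the rank bound proved at the start of this section (itself a consequence of Fact~\ref{fact:sdp-dual-rank}). The first step is to normalize the two colorings. Because $G$ contains a $k$-clique $K$, every $k$-coloring uses each of the $k$ colors exactly once on $K$, so given two colorings $c_1, c_2$ whose induced partitions of $V$ differ, we can permute the color labels of $c_2$ so that $c_1$ and $c_2$ agree on $K$. As the two colorings still differ as partitions of $V$, there must be a vertex $u \notin K$ with $c_1(u) \neq c_2(u)$.

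Next, fix reference vectors $v_1,\ldots,v_k \in \R^{k-1}$ (so $v_i \cdot v_i = 1$, $v_i \cdot v_j = -1/(k-1)$ for $i \neq j$, and $\sum_i v_i = 0$), and form the matrices $V_t \in \R^{n \times (k-1)}$ whose row indexed by $a \in V$ equals $v_{c_t(a)}$, for $t = 1,2$. Each $X_t := V_t V_t^\top$ is a feasible primal solution of rank $k-1$ with objective value $-1/(k-1)$, hence optimal. By convexity of the feasible set and linearity of the objective, $X := \tfrac{1}{2}(X_1 + X_2)$ is also optimal. Writing $U := \tfrac{1}{\sqrt{2}}[\,V_1 \mid V_2\,] \in \R^{n \times 2(k-1)}$ gives $X = UU^\top$, so $\text{rank}(X)$ equals the dimension of the span of the lifted vectors $u_a := \tfrac{1}{\sqrt{2}}(v_{c_1(a)},\, v_{c_2(a)}) \in \R^{2(k-1)}$.

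The heart of the argument is to show $\text{rank}(X) \geq k$. For each vertex of $K$ the normalization gives $c_1(a) = c_2(a)$, so its lifted vector lies in the diagonal subspace $D := \{(w,w) : w \in \R^{k-1}\}$. Since $K$ uses all $k$ colors and any $k-1$ of the reference vectors are linearly independent (their $(k-1)\times(k-1)$ Gram matrix $\tfrac{k}{k-1}I - \tfrac{1}{k-1}J$ is positive definite), the lifted clique vertices already span all of $D$, contributing $k-1$ dimensions. For the disagreement vertex, $u_u = \tfrac{1}{\sqrt{2}}(v_{c_1(u)}, v_{c_2(u)})$ with $c_1(u) \neq c_2(u)$; because the reference vectors are pairwise distinct, $v_{c_1(u)} \neq v_{c_2(u)}$, so $u_u \notin D$. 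Therefore $\text{span}\{u_a\}$ properly contains $D$ and has dimension at least $k$, giving $\text{rank}(X) \geq k$. Combined with $\text{rank}(X) + \text{rank}(S) \leq n$ from the lemma at the start of this section, this forces $\text{rank}(S) \leq n-k < n - k + 1$ for every optimal dual solution $S$, as required.

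I do not anticipate any serious obstacle. The only delicate point is the normalization step: after relabeling the colors of $c_2$ to match $c_1$ on the clique, one must verify that the two colorings still disagree somewhere, which is immediate from the assumption that their induced partitions of $V$ are distinct. Once that is in place, the rank lower bound reduces to two elementary facts about the reference vectors---linear independence of any $k-1$ of them, and pairwise distinctness---both of which are routine.
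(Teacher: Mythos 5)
Your proof is correct. The overall skeleton matches the paper's: both arguments normalize the two colorings so that they agree on the $k$-clique, observe that the disagreement must then occur at some vertex outside the clique, form the two reference Gram matrices, and take a convex combination to obtain an optimal primal solution whose rank must be shown to exceed $k-1$. Where you genuinely diverge is in the rank argument. The paper works with kernels: since $C_1$ and $C_2$ are PSD, $\ker\bigl(\alpha C_1 + (1-\alpha)C_2\bigr) = \ker(C_1)\cap\ker(C_2)$, and it exhibits a vector $x \in \ker(C_1)\setminus\ker(C_2)$ by restricting attention to $k$ carefully chosen vertices (the switching vertex $v$, a clique vertex $s$ sharing its $c_1$-color, and the remaining $k-2$ clique vertices), showing entry by entry that membership in both kernels would force $x=0$ via the invertibility of a $(k-2)\times(k-2)$ matrix of the form $\frac{k}{k-1}I - \frac{1}{k-1}J$. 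You instead work with the column space: writing $X = UU^\top$ for the lifted vectors $u_a = \frac{1}{\sqrt2}\bigl(v_{c_1(a)}, v_{c_2(a)}\bigr)$, the clique vertices span the full $(k-1)$-dimensional diagonal subspace $D$ and the disagreement vertex contributes a vector outside $D$. Your route is shorter and avoids the explicit kernel computation entirely; amusingly, the same matrix $\frac{k}{k-1}I-\frac{1}{k-1}J$ appears in both proofs (in yours, as the Gram matrix certifying linear independence of any $k-1$ reference vectors). Both arguments in fact give the integer-strengthened bound $\mathrm{rank}(X)\ge k$, from which the dual rank bound follows identically via the lemma at the start of the section.
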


\begin{proof}
Let $c_1, c_2$ be two distinct $k$-colorings of $G$. Let $K$ be a $k$-clique in $G$. Begin with a reference solution of rank $k-1$ and assign each color class a corresponding vector in such a way that $c_1(i) = c_2(i)$ for $i \in K$. This fixes the color labelling for the vertices in $K$. Then, we can represent these colorings by the PSD matrices $C_1$ and $C_2$, respectively, where $C_p(ij) = 1$ if $c_p(i)=c_p(j)$  and $C_p(ij) = -1/(k-1)$ if $c_p(i) \neq c_p(j)$ for $p=1,2$ and $i,j \in [n]$. Note then, for $\alpha \in (0,1)$, $X= \alpha C_1 + (1-\alpha)C_2$ is also a valid solution to the SDP. It suffices to prove that $X$ has rank greater than $k-1$ for some $\alpha \in (0,1)$. 

Because $C_1$ and $C_2$ are PSD, for any value of $\alpha \in (0,1)$, $ker(X) = ker(C_1) \cap ker(C_2)$. We will show there is a vector $x \in ker(C_1)$ such that $x \notin ker(C_2)$ from which the result directly follows. 

Let $v$ be a vertex whose color changes, i.e. $c_1(v) \neq c_2(v)$.  Then $v$ cannot be in $K$. Let $s\in K$ be such that $c_1(s) = c_1(v)$ and thus $c_2(s) \neq c_2(v)$. Let $i_1, i_2, \dots, i_{k-2} \in K$ such that $c_2(i_j) \neq c_2(v)$ for $j = 1, \dots k-2$. Also note that $c_1(i_j) \neq c_1(v)$ since $c_1(s) = c_1(v)$ and $(i_j, s) \in E$ for $j=1, \dots , k-2$. Because $dim(ker(C_1)) = n-(k-1) = n-k+1$, there exists $x\in ker(C_1)$ such that $x \neq 0$ but $x(i) = 0$ for $i \neq v,s,i_1,\dots,i_{k-2}$. Assume $x \in ker(C_2)$; we show this leads to a contradiction. Then,
$$
 (C_1x)(v) = C_1(vv)x(v) + C_1(vs)x(s) + \sum_{j=1}^{k-2}C_1(vi_j)x(i_j) = x(v) + x(s) -\frac{1}{k-1}\sum_{j=1}^{k-2}x(i_j) = 0 
 $$ and
 $$
 (C_2x)(v) = C_2(vv)x(v) + C_2(vs)x(s) + \sum_{j=1}^{k-2}C_2(vi_j)x(i_j) = x(v) -\frac{1}{k-1} x(s) -\frac{1}{k-1}\sum_{j=1}^{k-2}x(i_j) = 0
$$
from which we can conclude that $x(s) = 0$. Similarly,
$$
 (C_1x)(s) = C_1(sv)x(v) + \sum_{j=1}^{k-2}C_1(si_j)x(i_j) = x(v) -\frac{1}{k-1}\sum_{j=1}^{k-2}x(i_j) = 0 
 $$ and
 $$
 (C_2x)(s) = C_2(sv)x(v) + \sum_{j=1}^{k-2}C_2(vi_j)x(i_j) = -\frac{1}{k-1} x(v) -\frac{1}{k-1}\sum_{j=1}^{k-2}x(i_j) = 0
$$
imply that $x(v) = 0$. By considering row $i_j$ for $j=1, \dots, k-2$, we see that the $x(i_j)$ satisfy $$A[x(i_1), x(i_2), \dots, x(i_{k-2})]^T = 0$$ where $A$ is the $(k-2) \times (k-2)$ matrix with 1 along the diagonal and $-1/(k-1)$ everywhere else. We can write $A$ as $A = -\frac{1}{k-1}J + \frac{k}{k-1}I$ where $J$ is the all 1s matrix. Then $A$ has eigenvalues $2/(k-1)$ with multiplicity 1 and $k/(k-1)$ with multiplicity $k-3$, and thus has trivial nullspace. Therefore  $[x(i_1), x(i_2), \dots, x(i_{k-2})]=0$ which contradicts that $x\neq 0$. Then $x \notin ker(C_2)$, so $rank(X) > k-1$.

\end{proof}

While we have shown that $(k-1)$-trees have sufficiently high dual rank for the standard vector chromatic number SDP, it would be nice if we could completely characterize which graphs have high dual rank. A reasonable guess would be that a $k$-colorable graph $G$ containing a $k$-clique has high dual rank if and only if it is uniquely colorable. This assertion is true for the important special case of planar graphs.

\begin{corollary}
A planar graph with $n$ vertices has dual rank at least $n-3$ if and only if it is uniquely colorable.
\end{corollary}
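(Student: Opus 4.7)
The plan is to obtain both directions of the corollary by composing three results already in hand: Fowler's theorem \cite{Fowler98} that the uniquely 4-colorable planar graphs are exactly the planar 3-trees, Theorem \ref{result} specialized to $k = 4$, and Theorem \ref{multiple} specialized to $k = 4$. Both Theorem \ref{result} and Theorem \ref{multiple} rely on the presence of a $K_4$, so the corollary is most naturally read under the standing assumption that $G$ contains a $K_4$ (which is automatic on the uniquely-colorable side, since planar 3-trees contain a $K_4$).

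For the ``if'' direction, I would start from the assumption that the planar graph $G$ is uniquely 4-colorable and invoke Fowler to conclude that $G$ is a planar 3-tree, i.e.\ a $(k-1)$-tree with $k = 4$. Applying Theorem \ref{result} then yields an optimal dual solution with rank at least $n - k + 1 = n - 3$, which is precisely the conclusion.

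For the ``only if'' direction, I would argue the contrapositive. Suppose $G$ is planar, contains a $K_4$, and is not uniquely 4-colorable. From $\omega(G) \geq 4$ and the four color theorem we get $\chi(G) = 4$, so $G$ admits at least two distinct 4-colorings. Theorem \ref{multiple} applied at $k = 4$ then produces an optimal primal solution of rank strictly greater than $3$, and Fact \ref{fact:sdp-dual-rank} forces every optimal dual slack to have rank strictly less than $n - 3$, contradicting the hypothesis on dual rank.

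The entire argument is a short composition of prior results, so I do not anticipate real technical obstacles; the only point that requires care is the $K_4$ hypothesis, which must be explicit in the statement in order to apply Theorem \ref{multiple} and to reconcile ``uniquely colorable'' with ``uniquely 4-colorable'' via Fowler's characterization.
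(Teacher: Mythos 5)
Your proposal is correct and follows essentially the same route as the paper: Fowler's characterization plus Theorem \ref{result} for the ``if'' direction, and Theorem \ref{multiple} with Fact \ref{fact:sdp-dual-rank} for the ``only if'' direction. Your explicit attention to the standing $K_4$ hypothesis is a point the paper leaves implicit, but it does not change the argument.
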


\begin{proof}
Fowler \cite{Fowler98} shows that uniquely-colorable planar graphs are exactly the set of planar 3-trees. By \cref{result} we know such graphs have dual rank at least $n-3$. Furthermore, \cref{multiple} shows that graphs with multiple colorings have primal solutions with rank more than 3 and therefore do not have dual solutions with rank $n-3$. 
\end{proof}

Unfortunately, the following example shows unique colorability is not sufficient in general for a sufficiently high dual rank. Hillar and Windfeldt \cite[Figure 2]{HillarW08} presented the uniquely 3-colorable graph in \Cref{fig:counterexample} excluding vertex 25 which adds a triangle. Computing the primal and dual SDPs of this graph returns solutions with objective value $-0.5$, primal rank of 24, and dual rank of 1. If the claim were true, we would expect all dual solutions to have rank at least 23.

Thus it remains an interesting open question to characterize in general cases in which graphs have sufficiently high dual rank and have the reference solution as the optimal primal solution. 

\begin{figure}[!h]
\scalebox{.6}{
\centering
\begin{tikzpicture}
[auto, ->,>=stealth',node distance=3cm and 3cm,semithick,
vertex/.style={circle,draw=black,thick,inner sep=0pt,minimum size=4mm}]

\node[vertex, label=left:1, fill=black!25!white] (1) at (0,0) {};
\node[vertex, label=left:2, fill=black] (2) at (0,7) {};
\node[vertex, label=right:3] (3) at (7,7) {};
\node[vertex, label=right:4] (4) at (7,0) {};
\node[vertex, label=below:5, fill=black!25!white] (5) at (2,3) {};
\node[vertex, label=above:6, fill=black] (6) at (2,4) {};
\node[vertex, label=above:7] (7) at (3,5) {};
\node[vertex, label=above:8, fill=black!25!white] (8) at (4,5) {};
\node[vertex, label=above:9, fill=black] (9) at (5,4) {};
\node[vertex, label=below:10, fill=black!25!white] (10) at (5,3) {};
\node[vertex, label=below:11, fill=black] (11) at (4,2) {};
\node[vertex, label=below:12] (12) at (3,2) {};
\node[vertex, label=right:13, fill=black] (13) at (17,0) {};
\node[vertex, label=left:14] (14) at (10,0) {};
\node[vertex, label=left:15, fill=black!25!white] (15) at (10,7) {};
\node[vertex, label=right:16, fill=black!25!white] (16) at (17,7) {};
\node[vertex, label=below:17] (17) at (14,2) {};
\node[vertex, label=below:18, fill=black!25!white] (18) at (13,2) {};
\node[vertex, label=below:19, fill=black] (19) at (12,3) {};
\node[vertex, label=above:20] (20) at (12,4) {};
\node[vertex, label=left:21, fill=black] (21) at (13,5) {};
\node[vertex, label=right:22, fill=black] (22) at (14,5) {};
\node[vertex, label=above:23] (23) at (15,4) {};
\node[vertex, label=below:24, fill=black!25!white] (24) at (15,3) {};
\node[vertex, label=right:25, fill=black] (25) at (16,3.5) {};


\draw[-] (1) -- (2);
\draw[-] (1) -- (4);
\draw[-] (1) -- (6);
\draw[-] (1) -- (12);

\draw[-] (2) -- (3);
\draw[-] (2) -- (7);
\draw[-] (2) .. controls (-6,-5) and (9,-1) .. (14);

\draw[-] (3) -- (10);
\draw[-] (3) .. controls (8,3.5) and (11,-1) .. (18);

\draw[-] (4) -- (9);
\draw[-] (4) -- (11);

\draw[-] (5) -- (6);
\draw[-] (5) -- (9);
\draw[-] (5) -- (12);

\draw[-] (6) -- (7);
\draw[-] (6) -- (10);

\draw[-] (7) -- (8);
\draw[-] (7) -- (11);
\draw[-] (7) .. controls (7,9) .. (15);

\draw[-] (8) -- (9);
\draw[-] (8) -- (12);

\draw[-] (10) -- (11);
\draw[-] (10) .. controls (9,2) .. (14);
\draw[-] (10) .. controls (11.75,6) .. (22);

\draw[-] (11) -- (12);

\draw[-] (13) -- (14);
\draw[-] (13) -- (16);
\draw[-] (13) -- (18);
\draw[-] (13) -- (24);

\draw[-] (14) -- (15);
\draw[-] (14) -- (19);

\draw[-] (15) -- (22);

\draw[-] (16) -- (21);
\draw[-] (16) -- (23);

\draw[-] (17) -- (18);
\draw[-] (17) -- (21);
\draw[-] (17) -- (24);

\draw[-] (18) -- (19);
\draw[-] (18) -- (22);

\draw[-] (19) -- (20);
\draw[-] (19) -- (23);

\draw[-] (20) -- (21);
\draw[-] (20) -- (24);

\draw[-] (22) -- (23);

\draw[-] (23) -- (24);
\draw[-] (23) -- (25);

\draw[-] (24) -- (25);

\end{tikzpicture}
}
\caption{Uniquely 3-colorable graph with a $K_3$ which does not have any dual optimal solution of sufficiently high dual rank.}
\label{fig:counterexample}
\end{figure}
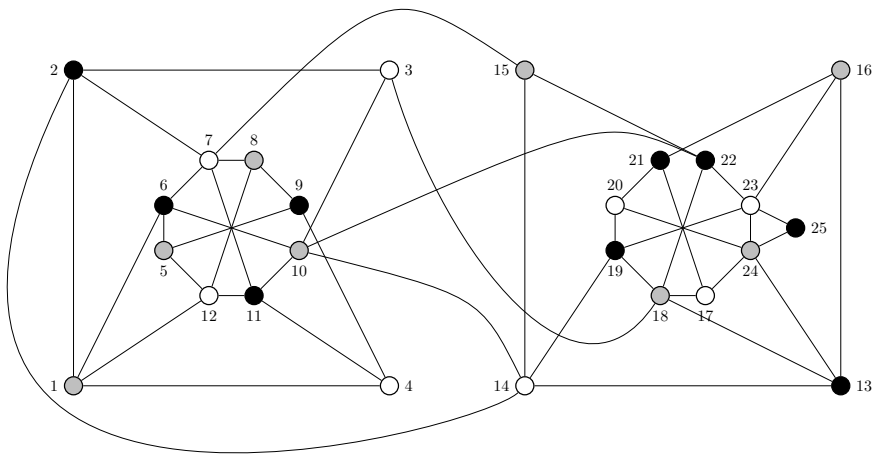

\section{A Semidefinite Program with Costs}
\label{modify}
Unfortunately, \cref{multiple} seems to indicate that this method of looking for graphs that have high dual rank with the standard vector chromatic number SDP cannot be generalized to graphs with multiple colorings. To extend this method, we consider a modified SDP described next. The new program utilizes a new objective function. Here, we introduce the notion of a cost matrix $C(G)$. The goal is to identify a $C(G)$ such that minimizing $C(G) \bullet X$ forces $X$ to have our desired rank. In particular, we consider the SDP given by 

\begin{equation*}
    \begin{array}{llll}
        \text{minimize} & C(G) \bullet X &\\
        \text{subject to} & X_{ij} = -1/(k-1), & \forall (i,j)\in E, \\
         (CP) & X_{ii} = 1,    & \forall i \in V, \\
         & X \succeq 0.& 
    \end{array}
\end{equation*}

\noindent We observe that the solutions to (SVCN) with $\alpha = -1/(k-1)$ are exactly the feasible solutions to (CP).  The corresponding dual SDP is 

\begin{equation*}
    \begin{array}{llll}
        \text{maximize} & \sum_{i=1}^n y_i - \frac{2}{k-1} \sum_{e \in E}z_e &\\
        \text{subject to} & S = C - \sum_{i=1}^n y_iE_{ii} - \sum_{e \in E} z_e E_e, &  \\
        (CD)  & S\succeq 0.& 
    \end{array}
\end{equation*}

\noindent where $E_{ii}$ is the matrix with a 1 at position $ii$ and 0 elsewhere and for $e=(i,j)$, $E_e$ is the matrix with 1 at positions $ij$ and $ji$ and 0 elsewhere.

To demonstrate how this cost matrix influences the behavior of $rank(X)$, assume that $G=(V,E)$ is a $k$-colorable graph containing a $k$-clique, but is not a $(k-1)$-tree. We still know there is a solution to the strict vector chromatic number program with $\alpha = -1/(k-1)$, and thus it is possible to find an $X$ satisfying our modified vector program constraints. Now fix $c:V \rightarrow [k]$ to be a valid $k$-coloring of $G$. With this coloring, we can define an associated matrix $C(G)$ in the following way:

\[ C(G)_{ij}= \begin{cases} 
      -1 & i<j, c(i)=c(j), \forall \ell \text{ such that } i<\ell<j, c(i) \neq c(\ell) \\
       & \\
      -1 & i>j, c(i)=c(j), \forall \ell \text{ such that } i>\ell>j, c(i) \neq c(\ell)  \\
      & \\
      0 & \text{otherwise}.
   \end{cases}
\]
Intuitively, the reference solution corresponding to the coloring given by $c$ is the solution that will minimize total cost since we'll look for a solution $X$ with $X_{ij}=1$ exactly when $C(G)_{ij}=-1$; for such entries, we'll have the same vectors corresponding to vertices $i$ and $j$.  But we can show additionally that there is a dual optimal solution for cost function $C(G)$ that has sufficiently high rank.

\begin{restatable}{theorem}{csdp}\label{dependent}
For $G$ and $C(G)$ as described, there is an optimal dual solution with rank at least $n-k+1$, so that any optimal primal solution has rank at most $k-1$.
\end{restatable}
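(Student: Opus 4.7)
The plan is to exhibit an explicit dual-feasible slack matrix $S$ of rank exactly $n-k+1$ and verify the complementary slackness relation $X_{\text{ref}} S = 0$ with the reference primal solution $X_{\text{ref}}$; joint optimality then follows by the trace identity $\operatorname{tr}(XS) = C(G) \bullet X - b^{T}y$, and Fact~\ref{fact:sdp-dual-rank} forces every optimal primal solution to have rank at most $k-1$. A direct computation using the Gram representation of $X_{\text{ref}}$ gives $(X_{\text{ref}} x)_i = u_{c(i)} \cdot \sum_{c'} T_{c'}(x)\, u_{c'}$, where $T_{c'}(x)$ denotes the sum of $x$ over color class $c'$; combined with $u_1 + \cdots + u_k = 0$, this yields $\ker(X_{\text{ref}}) = \{x \in \mathbb{R}^n : T_{c'}(x) \text{ is independent of } c'\}$, a subspace of dimension $n - k + 1$.

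The construction I propose is $S = L + \mathbf{1}_K \mathbf{1}_K^{T}$, where $K$ is the given $k$-clique (which must contain exactly one vertex from each color class, since its $k$ vertices need $k$ distinct colors and only $k$ are available) and $L = \sum_{c=1}^{k} L_c$, with $L_c$ the graph Laplacian of the path on the vertices of color class $c$ in increasing label order. The dual variables are then read off as $y_i = -S_{ii}$ and $z_e = -S_{ij}$ for $e = (i,j) \in E$, so the only nontrivial feasibility check beyond $S \succeq 0$ is that the non-edge off-diagonal entries of $S$ match $C(G)$. This holds because (a) $L$ has $-1$ at exactly the pairs $(s_r^c, s_{r+1}^c)$ consecutive within each color class and $0$ at all other off-diagonal positions, precisely the pattern in $C(G)$; and (b) any two distinct elements of $K$ lie in different color classes and are therefore adjacent in $G$, so $\mathbf{1}_K \mathbf{1}_K^{T}$ contributes only to diagonal and edge entries of $S$.

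For rank and complementary slackness, $L$ is PSD with $\ker(L) = \operatorname{span}\{\mathbf{1}_{S_1}, \ldots, \mathbf{1}_{S_k}\}$, so $\operatorname{Im}(L) = \{x : T_{c'}(x) = 0 \text{ for all } c'\} \subseteq \ker(X_{\text{ref}})$ and $\operatorname{rank}(L) = n - k$. Because $K$ meets each color class in exactly one vertex, $T_{c'}(\mathbf{1}_K) = 1$ for every $c'$, so $\mathbf{1}_K$ lies in $\ker(X_{\text{ref}})$ but not in $\operatorname{Im}(L)$. The rank-one update therefore lifts the rank to $n - k + 1$ while keeping $\operatorname{Im}(S) \subseteq \ker(X_{\text{ref}})$, which gives $X_{\text{ref}} S = 0$ and hence joint optimality. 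The subtle point, and what I see as the main obstacle, is finding a single rank-one correction that simultaneously (i) disturbs no non-edge off-diagonal entry, (ii) is not already in $\operatorname{Im}(L)$, and (iii) has its image inside $\ker(X_{\text{ref}})$; all three conditions are met exactly because the $k$-clique $K$ provides one representative from each color class, so without a $k$-clique this line of attack would break down.
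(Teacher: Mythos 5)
Your proof is correct, and in fact your certificate is \emph{identical} to the paper's: unwinding the paper's assignment $(y,z)$ (with $y_i$ built from column sums of $C(G)$ and $z_e=-1$ on clique edges) shows that its slack matrix is exactly $S=\sum_{c}L_c+\mathbf{1}_K\mathbf{1}_K^T$, the sum of the path Laplacians on the color classes plus the rank-one clique term. What differs is the verification. The paper proves positive semidefiniteness and the rank bound by induction on the number of vertices, peeling off one vertex at a time and writing $S(G)=T+vv^T$ with $v=e_{n+1}-e_m$; it only obtains the lower bound $\operatorname{rank}(S)\ge n-k+1$, and it certifies optimality separately (Theorem~\ref{costdual}) by computing that both objective values equal $\sum_{i,j}C(G)_{ij}$. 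You instead analyze $S$ globally: PSD-ness is immediate from the decomposition, $\ker(L)=\operatorname{span}\{\mathbf{1}_{S_1},\dots,\mathbf{1}_{S_k}\}$ gives $\operatorname{rank}(L)=n-k$ exactly, the observation $T_{c'}(\mathbf{1}_K)=1$ shows the rank-one update lifts the rank to exactly $n-k+1$, and optimality comes from $\operatorname{Im}(S)\subseteq\ker(X_{\mathrm{ref}})$, i.e.\ complementary slackness $X_{\mathrm{ref}}S=0$ plus the trace identity. Your route buys an exact rank computation, a transparent explanation of \emph{why} the $k$-clique is needed (it supplies the one kernel direction of $L$ that still lies in $\ker(X_{\mathrm{ref}})$ and touches only edge entries), and independence from any vertex-addition ordering; the paper's induction is more elementary but leaves the structural role of the clique and of the path Laplacians implicit. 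The feasibility check (non-edge, off-diagonal entries of $S$ must equal $C(G)_{ij}$) is handled correctly in both.
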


Let $K$ be a $k$-clique in our $k$-colorable graph $G$. Let $s_i$ denote the sum of entries in column $i$ of $C(G)$. Consider the assignment of dual variables given by $y_i = s_i$ for $i \notin K$, $y_i = s_i-1$ for $i \in K$, $z_e = -1$ for $e=(i,j), i,j\in K, i\neq j$, and $z_e = 0$ otherwise. We denote this assignment by $(y,z)$.

\begin{lemma}
The dual matrix $S$ constructed with $(y,z)$ is positive semidefinite.
\end{lemma}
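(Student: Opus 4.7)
The plan is to make the dual slack matrix $S$ completely explicit, observe that it decomposes into a block-diagonal part coming from the color classes plus a single rank-one correction supported on the clique $K$, and then verify that each piece is positive semidefinite.

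First I would compute each entry of $S = C(G) - \sum_i y_i E_{ii} - \sum_e z_e E_e$ from the given assignment. Since $c$ is a proper coloring, every edge $(i,j)\in E$ has $c(i)\neq c(j)$, so $C(G)_{ij}=0$ on edges; hence on edges of $K$ we get $S_{ij} = 0 - (-1) = 1$, on other edges $S_{ij}=0$, and on non-edges $S_{ij} = C(G)_{ij} \in \{0,-1\}$. The diagonal is $S_{ii} = -y_i$, which equals $-s_i$ outside $K$ and $-s_i+1$ on $K$. The definition of $C(G)$ shows that $-s_i$ counts how many neighbors vertex $i$ has in the path obtained by listing the color class of $i$ in increasing order of index (so $-s_i \in \{0,1,2\}$).

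Next I would observe the following decomposition. For each color $c$, let $L_c$ be the graph Laplacian of the path on $V_c=\{i:c(i)=c\}$ whose edges join indices that are consecutive within $V_c$, embedded into the full $n\times n$ matrix by placing zeros outside $V_c\times V_c$. Let $\mathbf{1}_K\in\R^n$ be the indicator vector of the clique $K$. I claim
\[
S \;=\; \sum_{c=1}^k L_c \;+\; \mathbf{1}_K\mathbf{1}_K^T .
\]
The verification is routine case analysis on whether $i,j$ lie in the same color class and whether each lies in $K$, using two observations: (i) no two clique vertices share a color, so the rank-one term contributes to a pair $(i,j)$ in a single color class only on the diagonal; and (ii) $-s_i$ is exactly the degree of $i$ in its color-class path, which matches the diagonal of $L_c$. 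This recovers the $1$'s on $K\times K$, the $-1$'s between consecutive same-colored pairs, and the correct diagonal values $\{-s_i,\,-s_i+1\}$.

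Once the decomposition is in hand, positive semidefiniteness is immediate: each $L_c$ is a graph Laplacian, hence PSD, and $\mathbf{1}_K\mathbf{1}_K^T$ is a rank-one PSD matrix, so $S$ is a sum of PSD matrices. The only delicate point, and the one that I expect to require the most care, is confirming the decomposition on the diagonal entries of clique vertices: for $i\in K$ one must check that the path-Laplacian contribution $-s_i$ plus the $+1$ from the rank-one term reproduces $-y_i = -s_i+1$, and symmetrically that no double counting occurs on off-diagonal positions in $K\times K$ (which is where two color-class paths and the rank-one piece could in principle collide, but in fact cannot, by observation (i) above).
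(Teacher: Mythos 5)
Your proof is correct, and the decomposition you give is essentially the ``closed form'' of what the paper does by induction. The paper's argument builds $G$ one vertex at a time: the base case $K_k$ yields the all-ones matrix (your $\mathbf{1}_K\mathbf{1}_K^T$ term), and each added vertex $v_{n+1}$ contributes a rank-one update $vv^T$ with $v = e_{n+1}-e_m$, where $m$ is the previous vertex of the same color --- which is exactly the Laplacian of one edge of one of your color-class paths. Unrolling that induction gives precisely your identity $S = \sum_c L_c + \mathbf{1}_K\mathbf{1}_K^T$. What your version buys is transparency and less bookkeeping: the paper's inductive step has to track how $C$, the column sums $s_i$, and the dual variables $y_i$ change when the new vertex is appended (implicitly using that the new vertex has the largest index, so only $s_m$ changes), whereas you verify a single global identity by a case analysis that hinges only on the two facts you isolate --- that clique vertices have pairwise distinct colors and that $-s_i$ is the degree of $i$ in its color-class path. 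The paper's inductive framing, on the other hand, is reused verbatim in the subsequent rank lower bound (Theorem 4.1), where the same $T + vv^T$ decomposition is needed to control the kernel; your direct decomposition would require a separate (though still easy) argument there, e.g.\ intersecting the kernels of the $k+1$ PSD summands.
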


\begin{proof}
Consider the complete graph on $k$ vertices given by $G=K_k$ (as this is the smallest possible $k$-colorable graph containing a $k$-clique). Observe that $C(K_k)$ is the matrix of all 0s as no two vertices can be colored the same. Thus $s_i = 0$ for all $i\in K_k$. Furthermore, $(y,z)$ assigns $y_i = -1 $ for all $i \in K_k$ and $z_e = -1$ for all $e \in K_k$. Then $S$ is the all-ones matrix with eigenvalues $k$ and 0 with multiplicity $k-1$, and thus is positive semidefinite. 

Now assume the claim is true for all $k$-colorable graphs containing a $k$-clique that have at most $n$ vertices. Let $G=(V,E)$ such that $|V| = n+1$, $G$ is $k$-colorable, and $G$ contains a $k$-clique, $K$. Then $G$ can be constructed by adding a vertex $v_{n+1}$ and its adjacent edges to some graph $G'=(V', E')$ such that $|V'| = n$, $G'$ is $k$-colorable, and $G'$ contains $K$. By assumption, the matrix $S'$ corresponding to $G'$, $\{y_i'\}_{i=1}^n,$ and $\{z_e'\}_{e\in E'}$ is positive semidefinite. 

Let $v_m$ be the largest-indexed vertex that is the same color as $v_{n+1}$ after it is added to $G$. We consider how the addition of $v_{n+1}$ affects $C(G'), \{y_i\}_{i=1}^{n+1},$ and $\{z_e\}_{e\in E}.$ For $i,j \neq n+1$, $C(G)_{ij} = C(G')_{ij}$. We also observe $C(G)_{m(n+1)} = C(G)_{(n+1)m} = -1$ and $C(G)_{i(n+1)} = C(G)_{(n+1)i} = 0$ for $i\neq m$. Furthermore, for $i \neq m, n+1$, $y_i = y_i'$, while $y_m = y_m' -1$ and $y_{n+1} = -1$. Finally, $z_e = -1$ for $e \in K$ and 0 otherwise.

With this update, we see that 

$$ S(G) = \begin{bmatrix}
   & &  &\vline & 0\\
    & S'(G') & &  \vline & \vdots \\
    & & &  \vline &  0\\
\hline
  0 & \cdots & 0 & \vline & 0
\end{bmatrix} + vv^T$$ where $v^T = [0,\dots, 0, -1,0,\dots,0,1]$. It follows that $S(G)$ is positive semidefinite since $S'(G')$ is PSD by assumption and for all $x \in \mathbb{R}^{n+1}$, $x^Tvv^Tx = (v^Tx)^2 \geq 0$.

\end{proof}

\begin{theorem}\label{costdual}
The assignment $(y,z)$ is an optimal dual solution, and the reference solution is an optimal primal solution.
\end{theorem}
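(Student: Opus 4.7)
The plan is to invoke weak duality: I will exhibit a primal feasible solution (the reference solution $X^*$ associated with the coloring $c$) together with the dual feasible solution $(y,z)$, and show that their objective values coincide. By the prior lemma, $(y,z)$ already yields a PSD slack matrix $S$, and the equation $S = C(G) - \sum_i y_iE_{ii} - \sum_e z_e E_e$ is just the defining relation, so $(y,z)$ is dual feasible. Primal feasibility of $X^*$ is immediate: taking $v_i = u_{c(i)}$ for the reference vectors $u_1,\dots,u_k$ gives $X^*_{ii} = 1$, $X^*_{ij} = -1/(k-1)$ on every edge (since $c$ is a proper coloring), and $X^* \succeq 0$.

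Next I would compute $C(G)\bullet X^*$. The crucial observation is that whenever $C(G)_{ij} = -1$, by construction $c(i) = c(j)$, hence $X^*_{ij} = v_i\cdot v_j = 1$. Therefore $C(G)\bullet X^*$ equals $-1$ times the number of nonzero entries of $C(G)$. For a color class of size $n_c$, the definition of $C(G)$ puts a $-1$ at each pair $(i,j)$ with $i < j$ of the same color such that no vertex between them shares that color, i.e.\ at each consecutive same-color pair in the index ordering, giving $n_c - 1$ such entries; symmetrically there are $n_c - 1$ entries in the lower triangle. Summing over the $k$ color classes yields $\sum_c 2(n_c - 1) = 2(n-k)$, so $C(G)\bullet X^* = -2(n-k)$.

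I would then compute the dual objective for $(y,z)$. By construction $\sum_i y_i = \sum_i s_i - k$, where the $-k$ collects one $-1$ correction per vertex of $K$. But $\sum_i s_i$ is simply the total of all entries of $C(G)$, which by the count above equals $-2(n-k)$. For the edge term, $z_e = -1$ on each of the $\binom{k}{2}$ edges of $K$ and $0$ elsewhere, so $-\tfrac{2}{k-1}\sum_e z_e = -\tfrac{2}{k-1}\cdot\bigl(-\tfrac{k(k-1)}{2}\bigr) = k$. Adding, the dual objective is $\bigl(-2(n-k)-k\bigr)+k = -2(n-k)$, matching the primal value. Weak duality then forces both $X^*$ and $(y,z)$ to be optimal.

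There is no real conceptual obstacle; the main work is the bookkeeping in the second and third steps, verifying that the $-k$ correction spread across the clique vertices exactly cancels the $+k$ contribution coming from the clique edges, and that the nonzero count of $C(G)$ is $2(n-k)$. Both agreements are designed into the construction of $C(G)$ and $(y,z)$, so once the counting is done carefully, optimality follows immediately from weak duality.
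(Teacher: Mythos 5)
Your proposal is correct and follows essentially the same route as the paper: exhibit the reference solution as primal feasible, use the preceding lemma for dual feasibility, and match the two objective values to conclude optimality by weak duality. The only difference is cosmetic — you evaluate the common value explicitly as $-2(n-k)$, whereas the paper leaves it as $\sum_{1\le i,j\le n} C(G)_{ij}$; the cancellation of the $-k$ from the clique vertices against the $+k$ from the clique edges is identical in both.
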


\begin{proof}
The previous lemma shows $(y,z)$ satisfies the constraints. Thus to prove this claim, it suffices to show $(y,z)$ maximizes the objective function. We demonstrate this by showing that the reference solution has the same objective function value.

First, we consider the dual objective function value for $(y,z).$ We have
$$
\begin{aligned}
    & \sum_{i=1}^n y_i - \frac{2}{k-1}\sum_{e\in E} z_e \\
    &= (\sum_{i=1}^n s_i) - k + \frac{2}{k-1} {k \choose 2} \\
    &= (\sum_{1 \leq i,j \leq n} C(G)_{ij}) - k + k = \sum_{1 \leq i,j \leq n} C(G)_{ij}.
\end{aligned}
$$

Now, let $X$ be the matrix given by the reference solution:
\[ X_{ij}= \begin{cases} 
      1 & i=j \\
       & \\
      1 & c(i) = c(j)  \\
      & \\
      -\frac{1}{k-1} & c(i) \neq c(j)
   \end{cases}
\]
\noindent where $c: V \rightarrow [k]$ is the fixed coloring used to generate $C(G)$. Note that $X$ satisfies the constraints of the primal SDP (CP). The objective function value is given by 

$$
\begin{aligned}
C(G) \bullet X &= \sum_{i,j: c(i) = c(j)} C(G)_{ij} - \frac{1}{k-1}\sum_{i,j: c(i) \neq c(j)} C(G)_{ij} \\
&= \sum_{i,j: c(i) = c(j)} C(G)_{ij} = \sum_{1\leq i,j, \leq n} C(G)_{ij}.
\end{aligned}
$$

\noindent Since the primal and dual objective function values are equal, the corresponding solutions must be optimal.
\end{proof}

Finally, we restate and prove \cref{dependent}.

\csdp*

\begin{proof}
Again begin by considering the complete graph on $k$ vertices given by $K_k$. As previously discussed, the matrix $S$ determined by $(y,z)$ is simply the all-ones matrix. It is straightforward to see this has rank 1 = $k-k+1$.

Assume the claim holds for all $k$-colorable graphs containing a $k$-clique with at most $n$ vertices. Let $G$ be a $k$-colorable graph containing a $k$-clique with $n+1$ vertices. Following the same decomposition used previously, we can write
$$ S(G) = T + vv^T$$ where 

$$v(i) = \begin{cases}
-1 & i = m \\
1 & i=n+1 \\
0 & otherwise
\end{cases}
$$ for $v_m$ the largest-valued vertex colored the same as $v_{n+1}$ and 
$$T = \begin{bmatrix}
   & &  &\vline & 0\\
    & S'(G') & &  \vline & \vdots \\
    & & &  \vline &  0\\
\hline
  0 & \cdots & 0 & \vline & 0
\end{bmatrix}.$$

Since $S(G)$ is a sum of positive semidefinite matrices, $ker(S(G)) = ker(T) \cap ker(vv^T)$. Observe that $x= (0,\dots, 0,1) \in ker(T)$. However, $x \notin ker(vv^T)$. Thus $dim(ker(S(G))) = dim(ker(T) \cap ker(vv^T)) < dim(ker(T)) \leq k $ as desired. 
\end{proof}

\begin{theorem}\label{uniquesol}
For $G$ and $C(G)$ as described, the reference solution is the unique optimal primal solution.
\end{theorem}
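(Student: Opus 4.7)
The plan is to combine complementary slackness with an explicit description of the kernel of the specific dual matrix $S$ built in the proof of \cref{dependent}, then close the argument using the $k$-clique $K \subseteq V(G)$ together with the symmetry of $X$.

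The first step is to characterize $\ker(S)$ by induction on $|V|$, following the recursive construction of $S$. The claim is that
\[
\ker(S) \;=\; \Bigl\{\, x \in \R^n \;:\; \exists\,\xi \in \R^k \text{ with } \textstyle\sum_{a=1}^k \xi_a = 0 \text{ and } x_i = \xi_{c(i)} \text{ for every } i \in V\, \Bigr\},
\]
i.e.\ $\ker(S)$ consists of those vectors that are constant on each color class of $c$ and whose color-class values sum to zero. The base case $G = K_k$ is immediate: $S$ is the all-ones matrix, its kernel is $\{x : \sum_i x_i = 0\}$, and every color class is a singleton. In the inductive step the proof of \cref{dependent} writes $S(G) = T + vv^T$ with $v = e_{n+1} - e_m$ and $c(v_m) = c(v_{n+1})$, so intersecting $\ker(T) = \ker(S'(G')) \times \R$ with $v^\perp$ adds the single constraint $x_{n+1} = x_m$, which is exactly what is needed to extend the color-class-constant description to include $v_{n+1}$. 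A dimension count gives $\dim \ker(S) = k-1$, which is tight by \cref{dependent} together with $\mathrm{rank}(S) + \mathrm{rank}(X^*) \leq n$ for the reference solution $X^*$.

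Now let $X$ be any optimal primal solution. By \cref{fact:sdp-dual-rank}, $XS = 0$, so every column of $X$ lies in $\ker(S)$, and there exist scalars $\xi^{(i)}_a$ with $\sum_a \xi^{(i)}_a = 0$ such that $X_{ij} = \xi^{(i)}_{c(j)}$ for every $j$. The diagonal constraint $X_{ii} = 1$ gives $\xi^{(i)}_{c(i)} = 1$. When $i \in K$, the other $k-1$ vertices of the clique are neighbors of $i$ and carry all $k-1$ colors distinct from $c(i)$, so the edge constraint $X_{ij} = -1/(k-1)$ forces $\xi^{(i)}_a = -1/(k-1)$ for every $a \neq c(i)$. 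Hence column $i$ of $X$ is the reference column for each $i \in K$.

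Finally, for $i \notin K$, I would appeal to symmetry $X_{ij} = X_{ji}$ with $j \in K$: the right-hand sides were determined in the previous step, and since $K$ meets every color class this fixes $\xi^{(i)}_a$ for every $a$, yielding $\xi^{(i)}_a = 1$ if $a = c(i)$ and $-1/(k-1)$ otherwise. Thus column $i$ is again the reference column, so $X$ coincides with the reference solution matrix. I expect the main technical obstacle to be the inductive kernel characterization: one must verify that the kernel of the recursively built $S$ has no stray elements beyond those predicted by the color-class-constant formula, which is handled by the dimension count combined with tightness of the rank bound forced by the known optimal reference primal.
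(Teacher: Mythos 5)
Your proof is correct, but it takes a genuinely different route from the paper's. The paper's argument is driven by the objective value: since the optimum equals $\sum_{i,j} C(G)_{ij}$ (from \cref{costdual}) and every entry of $C(G)$ is $0$ or $-1$ while $X_{ij}\le 1$, any optimal $X$ must have $X_{ij}=1$ along the chain of entries where $C(G)_{ij}=-1$; this glues each color class to a single unit vector, and the clique then forces those $k$ unit vectors into the reference configuration via the linear-independence/hyperplane argument of \cref{thm:svcn-color}. You never touch the objective: you compute $\ker(S)$ exactly for the dual matrix built in \cref{dependent} (vectors constant on color classes whose class values sum to zero) and use complementary slackness $XS=0$ to force every row of $X$ to be constant on color classes; the diagonal, the clique rows, and symmetry then pin down every entry numerically, with no appeal to \cref{thm:svcn-color} and no use of the rank bound of \cref{dependent}. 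Your inductive kernel computation is sound --- it reuses the decomposition $S(G)=T+vv^{T}$ with both summands PSD, so $\ker(S(G))=\ker(T)\cap v^{\perp}$, and the new constraint $x_{n+1}=x_{m}$ is exactly what extends the color-class description (your aside about needing tightness from \cref{dependent} is superfluous, since the characterization already yields $\dim\ker(S)=k-1$). What your route buys is a fully explicit, entrywise determination of $X$ that re-derives the rank statement for free; what the paper's route buys is brevity, since the chain equalities fall straight out of the computation already done in \cref{costdual}. Both arguments inherit the same implicit assumption from the proof of \cref{dependent} that the vertices are ordered so that the last one can be removed leaving the clique $K$ intact.
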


\begin{proof}
\cref{costdual} tells us that the reference solution is an optimal primal solution, while  \cref{dependent} tells us that any optimal primal solution has rank at most $k-1$. Therefore it suffices to show that any rank $k-1$ optimal primal solution is in fact the reference solution.

From the proof of \cref{costdual}, we know that the optimal objective function value is $\sum_{1\leq i,j \leq n}C(G)_{ij}.$ Furthermore, any primal feasible $X$ satisfying $C(G) \bullet X = \sum_{1\leq i,j \leq n}C(G)_{ij}$ must have $X_{ij} = 1$ whenever $C(G)_{ij} = -1$ since $X_{ij} \leq 1$ for all $1\leq i,j \leq n$ and each entry of $C(G)$ is either 0 or -1. Let $K$ be a $K_k$ in $G$, $X$ be a rank $k-1$ primal optimal solution and $c$ be the $k$-coloring used to construct $C(G)$. Define $k$ sets by $S_i = \{v \in V: c(v) = i\}$ for $i=1,\dots, k$ and let $n_i = |S_i|.$ 

We claim that $X$ assigns each vertex a vector from the reference solution depending only on which $S_i$ the vertex is a member of. For $i=1, \dots, k$, sort the vertices in $S_i$ from smallest label to largest so that $S_i = \{v_{i_1}, v_{i_2}, \dots, v_{i_{n_i}}\}$ where $i_1 < i_2 < \dots < i_{n_i}.$ By construction of $C$, we have that $C_{i_1i_2} = C_{i_2i_3} = \dots = C_{i_{n_i-1}i_{n_i}} = -1$ implying $X_{i_1i_2} = X_{i_2i_3} = \dots = X_{i_{n_i-1}i_{n_i}} = 1$. In particular, $v_{i_1}, v_{i_2}, \dots, v_{i_{n_i}}$ are assigned the same unit vector by $X$.

Let $u_i$ be the vector corresponding to $S_i$ for $i=1, \dots, k$. We know that exactly one member of $K$ must be in each $S_i$ for $i=1,\dots,k$, so $u_i \cdot u_j = -1/(k-1)$ for $1 \leq i, j \leq k$. Then following the proof of \cref{thm:svcn-color}, we see that these vectors are exactly the reference solution.
\end{proof}

\section{Experimental Results}\label{experiment}

Two heuristics have been implemented and experimentally demonstrated success returning low-rank primal solutions for planar graphs. Neither algorithm assumes knowledge of a graph coloring.  We tested these heuristics on all maximal planar graphs of up to 14 vertices that contain a $K_4$.  These graphs were generated  via the planar graph generator plantri due to Brinkmann and McKay \cite{BrinkmannM07} found at \url{https://users.cecs.anu.edu.au/~bdm/plantri/}. The `-a' switch was used to produce graphs written in ascii format. The code was implemented in Python using the MOSEK Optimizer as the SDP solver. Both the graph data files and algorithm implementation can be found at \url{https://github.com/rmirka/four-coloring.git}.  Our results are shown in Table \ref{table:expresults}.  The heuristics successfully colored all graphs with up to 11 vertices, and successfully colored 99.75\% of the graphs of 12-14 vertices.  We do not record the running time of the heuristics; because the heuristics involve repeatedly solving semidefinite programs, they are not competitive with other greedy or local search style heuristics.  Our primary reason for studying these heuristics was to find whether we could reliably find a cost matrix $C$ giving rise to a four-coloring for planar graphs.

\begin{table}[h!]
\centering
\begin{tabular}{ |c|c|c|c| } 
 \hline
\# nodes & \# maximally planar graphs with $K_4$ & \# heuristic 1 failures & \# heuristic 2 failures\\ 
 \hline
5 & 1 & 0 & 0 \\ 
6 & 1 & 0 & 0 \\ 
 7 & 4 & 0 & 0 \\ 
 8 & 12 & 0 & 0 \\ 
 9 & 45 & 0 & 0 \\ 
10 & 222 & 0 & 0 \\ 
11 & 1219 & 0 & 0 \\ 
12 & 7485 & 18 ($\sim$ .24\%) & 18 ($\sim$ .24\%)\\ 
13 & 49149 & 108 ($\sim$ .22\%) & 116 ($\sim$ .24\%) \\
14 & 337849 & 619 ($\sim$ .18\%) & 811 ($\sim$ .24\%) \\
 \hline
\end{tabular}
\caption{This table depicts the number of times the heuristic algorithms failed on maximally planar graphs with between 5 and 14 vertices.}
\label{table:expresults}
\end{table}


At a high-level, both heuristics follow the same procedure. At each step, they solve the vector program (CP) given in \cref{modify}. If the returned solution does not have the desired rank, the cost matrix $C$ is updated and the process is repeated. The heuristics differ in how the cost matrix is updated. 


\begin{algorithm}[]
\SetAlgoLined
 Find a clique $K = \{k_1,k_2,k_3,k_4\}$ \;
 $C = 0$\;
 $i,j=0$\;
 $badcolors = []$\;
 $good = True$\;
 $X,S,r,p = solveModified(G,C)$\Comment*[r]{call the Mosek optimizer to solve the SDP on graph G with cost matrix C and return the primal and dual matrices ($X,S$, respectively) and ranks($r,p$, respectively)}
 \While{$r>3$}{
 \If{$X_{ik_j} \neq 1$ and $good$}{
    $badcolors = badcolors \cup \{k_j\}$\;
    $C_{iS_j[length(S_j) - 2]} = C_{S_j[length(S_j) - 2]i} = 0$\;
    $X,S,r,p = solveModified(G,C)$\;
    $good = False$
 }
 \Else{
 $good = True$\;
  $S_s = \{v \in V: X_{vk_s} = 1\}$, $s=1,2,3,4$\;
  $colored = \cup_{s=1}^4 S_s$\;
  $stillLooking = True$\;
  \While{stillLooking}{
  \For{$q=1,\dots, 4$}{
    \If{$stillLooking$ and $i \notin colored$ and $k_q \notin badcolors$ and $X_{ik_q} \neq 1, -1/3$}{
    $S_q = S_q \cup \{i\}$\;
    stillLooking = False\;
    $j = q$\;
    }
  }
  \If{$stillLooking$}{
  $i = i+1 \mod n$\;
  $badcolors = []$}
  
  }
  $C=0$\;
  \For{$q=1,\dots,4$}{
    \For{$s=1,\dots,length(S_q)-1$}{
        $C_{s,s+1}= C_{s+1,s}= -1$\;}
  }
  $X,S,r,p = solveModified(G,C)$
 }
 }
 \caption{$(G=(V,E))$}
\end{algorithm}

The first heuristic (Algorithm 1) is based on the coloring-dependent cost matrix discussed in \Cref{modify}. The algorithm first identifies a $K_4 = \{k_1, k_2, k_3, k_4\}$ and finds an initial solution with $C=0$. If the primal solution does not have low enough rank, the returned solution is used to update the cost matrix. Let $S_i = \{v \in V : X_{vk_i} = 1\}$ for $i=1,2,3,4$. Let $v$ be a vertex in $V \setminus (\cup_{i=1}^4 S_i)$. Then there must exist $i^* \in \{1,2,3,4\}$ such that $X_{vk_{i^*}} \neq 1$ and $X_{vk_{i^*}} \neq -1/3$; we update this $S_{i^*}$ by adding $v$ to it. Now, $C$ is constructed based on the $S_j, j=1,2,3,4$. In particular, for $i=1,2,3,4$, if $n_i$ denotes the number of vertices in $S_i$, then for $j=1, \dots, n_i-1$, we set $C_{rs} = C_{sr} = -1$ where $r$ and $s$ are the $j$th and $j+1$st vertices in $S_i$. This new cost matrix $C$ is used to compute an updated solution $\hat{X}$. If $\hat{X}$ is of the desired rank, the algorithm terminates. If not, we first check to see if $\hat{X}_{vk_{i^*}} = 1$, i.e. if our selected vertex from the previous iteration was successfully colored. If yes, we repeat the process beginning with our solution $\hat{X}$ and selecting a currently uncolored vertex. If $v$ was not successfully colored, we remove the entry in the cost matrix corresponding to this assignment from the previous iteration and resolve the SDP while adding $k_{i^*}$ to a list of `bad' colors for $v$. We now repeat the process by selecting a new feasible color class for $v$ (following the same rules as previously in addition to requiring it not be in the list of `bad' colors for $v$) and constructing $S_i$, $i=1,2,3,4$ and $C$ accordingly.

\begin{algorithm}[]
\SetAlgoLined
 Find a clique $K = \{k_1,k_2,k_3,k_4\}$ \;
 $C = 0$\;
 $X,S,r,p = solveModified(G,C)$\Comment*[r]{call the Mosek optimizer to solve the SDP on graph G with cost matrix C and return the primal and dual matrices ($X,S$, respectively) and ranks($r,p$, respectively)}
 $v^*=k^* = k_1$\;
 $badcolors = []$\;
 $good = True$\;
 \While{$r>3$}{
    \eIf{$X_{v^*,k^*} \neq 1$ $\&\&$ good}{
    $C_{v^*,k^*} = C_{k^*,v^*} = 0$\;
    $X,S,r,p = solveModified(G,C)$\;
    $badcolors = badcolors \cup \{k^*\}$\;
    $good = False$\;
    }{
    $good = True$\;
  $S_s = \{v \in V: X_{vk_s} = 1\}$, $s=1,2,3,4$\;
  $colored = \cup_{s=1}^4 S_s$\;
  $stillLooking = True$\;
  \While{stillLooking}{
  \For{$q=1,\dots, 4$}{
    \If{$stillLooking$ and $v^* \notin colored$ and $k_q \notin badcolors$ and $X_{v^*k_q} \neq 1, -1/3$}{
    $C_{v^*k_q} = C_{k_qv^*} = -1$\;
    stillLooking = False\;
    $k^* = k_q$\;
    }
  }
  \If{$stillLooking$}{
  $v^* = v^*+1 \mod n$\;
  $badcolors = []$}
  
  }
  $X,S,r,p = solveModified(G,C)$
 }
 }
 \caption{$(G=(V,E))$}
\end{algorithm}

The second heuristic (Algorithm 2) is motivated by similar ideas but distinct cost-matrix updates. Again, the algorithm first identifies a $K_4 = \{k_1, k_2, k_3, k_4\}$ and finds an initial solution with $C=0$. Now, if $X$ is a primal solution with greater rank than desired, let $S = \{v \in V : \exists k \in K_4 \text{ such that } X_{vk} = 1\}.$ Intuitively, $S$ is the set of vertices that are aligned with the four vectors corresponding to the $K_4$ and thus have rank 3. Now, choose a single vertex $v \in V\setminus S$. Again, there must exist $k \in K_4$ such that $X_{vk} \neq 1$ or $-1/3$, so $C$ is updated such that $C_{vk} = C_{kv} = -1$. Now the vector program is run again, and the value of $X_{vk}$ in the new solution is immediately checked. If $X_{vk} = 1$ now, the algorithm proceeds as usual. However if $X_{vk} \neq 1$, the cost matrix is updated so that $C_{vk} = 0$ again and a different entry is chosen to update. Again, this process is repeated until the desired primal rank is achieved. 

In both heuristics, the termination condition is that the primal rank is equal to 3, but this doesn't necessarily guarantee that the dual rank is $n-3$. If instead one wanted to guarantee high dual rank, one could run the algorithm one more time, i.e.\ once the low-rank primal solution is achieved, extract the coloring and construct the corresponding $C$ matrix as previously described in \Cref{dependent}.

The example in \Cref{fig:obstacle} causes both heuristics to fail without coloring the graph. First we note the $K_4 = \{2,5,6,7\}$. In the first iteration of the heuristic, these are the only four vertices that are assigned colors. In the second iteration, both heuristics successfully color vertex 1 to match vertex 6. However, afterwards each heuristic is unable to color any more vertices (it tries and fails on all other possible colors for the remaining vertices).

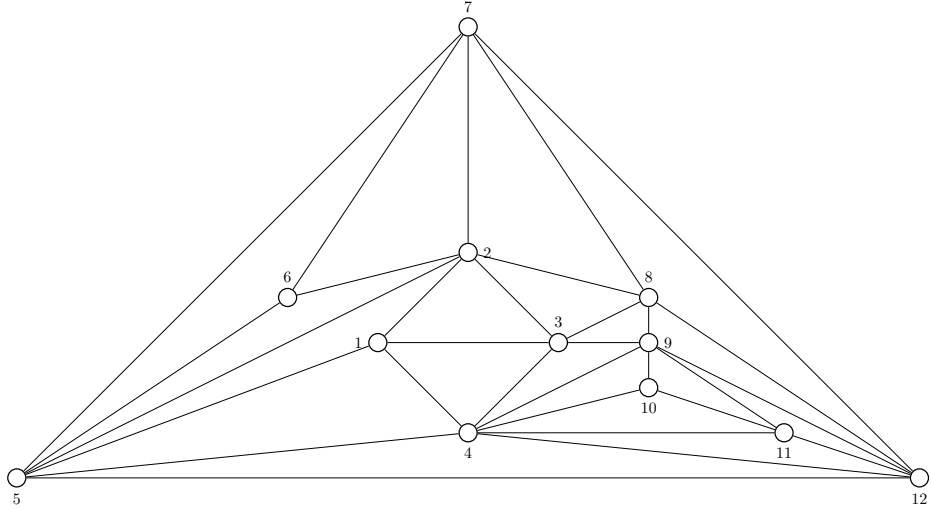
\begin{figure}[!h]
\begin{center}
\scalebox{.6}{
\centering
\begin{tikzpicture}
[auto, ->,>=stealth',node distance=3cm and 3cm,semithick,
vertex/.style={circle,draw=black,thick,inner sep=0pt,minimum size=4mm}]

\node[vertex, label=left:1] (1) at (8,3) {};
\node[vertex, label=right:2] (2) at (10,5) {};
\node[vertex, label=above:3] (3) at (12,3) {};
\node[vertex, label=below:4] (4) at (10,1) {};
\node[vertex, label=below:5] (5) at (0,0) {};
\node[vertex, label=above:6] (6) at (6,4) {};
\node[vertex, label=above:7] (7) at (10,10) {};
\node[vertex, label=above:8] (8) at (14,4) {};
\node[vertex, label=right:9] (9) at (14,3) {};
\node[vertex, label=below:10] (10) at (14,2) {};
\node[vertex, label=below:11] (11) at (17,1) {};
\node[vertex, label=below:12] (12) at (20,0) {};


\draw[-] (1) -- (2);
\draw[-] (1) -- (3);
\draw[-] (1) -- (4);
\draw[-] (1) -- (5);

\draw[-] (2) -- (3);
\draw[-] (2) -- (5);
\draw[-] (2) -- (6);
\draw[-] (2) -- (7);
\draw[-] (2) -- (8);

\draw[-] (3) -- (4);
\draw[-] (3) -- (8);
\draw[-] (3) -- (9);

\draw[-] (4) -- (5);
\draw[-] (4) -- (9);
\draw[-] (4) -- (10);
\draw[-] (4) -- (11);
\draw[-] (4) -- (12);

\draw[-] (5) -- (6);
\draw[-] (5) -- (7);
\draw[-] (5) -- (12);

\draw[-] (6) -- (7);

\draw[-] (7) -- (8);
\draw[-] (7) -- (12);

\draw[-] (8) -- (9);
\draw[-] (8) -- (12);

\draw[-] (9) -- (10);
\draw[-] (9) -- (11);
\draw[-] (9) -- (12);

\draw[-] (10) -- (11);

\draw[-] (11) -- (12);

\end{tikzpicture}
}
\end{center}
\caption{Algorithm Obstacle: $K_4 = \{2,5,6,7\}$}
\label{fig:obstacle}
\end{figure}

We considered whether our heuristics get stuck on graphs that also contained vertices resulting in irrevocable Kempe chain tangles.  Irrevocable Kempe chain tangles occur when Kempe’s local-search method of recoloring Kempe chains fails to make a color available for the vertex of interest; see Gethner et al. \cite{Gethner09} for a computational and empirical analysis of Kempe's method and irrevocable Kempe chain tangles. As such, finding an irrevocable Kempe chain tangle in a graph indicates that Kempe’s method will fail to color the graph. We tested two graphs known to contain vertices that often result in irrevocable Kempe chain tangles and slightly modified them to ensure they contained a $K_4$. The graphs are given in \Cref{fig:kempe1} and \Cref{fig:kempe2}. Our heuristics did successfully color these graphs, indicating that the class of graphs for which our algorithm does not terminate is different than the ones for which coloring with Kempe chains does not work. 

\begin{figure}[!h]
\begin{center}

\scalebox{.6}{
\centering
\begin{tikzpicture}
[auto, ->,>=stealth',node distance=3cm and 3cm,semithick,
vertex/.style={circle,draw=black,thick,inner sep=0pt,minimum size=4mm}]

\node[vertex, label=left:7] (7) at (8,3) {};
\node[vertex, label=right:6] (6) at (10,5) {};
\node[vertex, label=right:8] (8) at (12,3) {};
\node[vertex, label=below:10] (10) at (10,1) {};
\node[vertex, label=below:9] (9) at (0,0) {};
\node[vertex, label=above:2] (2) at (8,6) {};
\node[vertex, label=above:1] (1) at (10,10) {};
\node[vertex, label=above:3] (3) at (12,6) {};
\node[vertex, label=right:5] (5) at (22,2) {};
\node[vertex, label=above:4] (4) at (22,4) {};
\node[vertex, label=below:12] (12) at (24,0) {};
\node[vertex, label=below:11] (11) at (20,0) {};


\draw[-] (1) -- (2);
\draw[-] (1) -- (3);
\draw[-] (1) -- (6);
\draw[-] (1) -- (9);
\draw[-] (1) -- (11);

\draw[-] (2) -- (6);
\draw[-] (2) -- (7);
\draw[-] (2) -- (9);

\draw[-] (3) -- (6);
\draw[-] (3) -- (8);
\draw[-] (3) -- (11);

\draw[-] (4) -- (5);
\draw[-] (4) -- (11);
\draw[-] (4) -- (12);

\draw[-] (5) -- (11);
\draw[-] (5) -- (12);

\draw[-] (6) -- (7);
\draw[-] (6) -- (8);

\draw[-] (7) -- (8);
\draw[-] (7) -- (9);
\draw[-] (7) -- (10);

\draw[-] (8) -- (10);
\draw[-] (8) -- (11);

\draw[-] (9) -- (10);
\draw[-] (9) -- (11);

\draw[-] (10) -- (11);

\draw[-] (11) -- (12);

\end{tikzpicture}
}
    
\end{center}
\caption{A graph for which at least one vertex results in an irrevocable Kempe chain tangle for at least one labeling.}
\label{fig:kempe1}
\end{figure}

\begin{figure}[!h]
\begin{center}

\scalebox{.6}{
\centering
\begin{tikzpicture}
[auto, ->,>=stealth',node distance=3cm and 3cm,semithick,
vertex/.style={circle,draw=black,thick,inner sep=0pt,minimum size=4mm}]

\node[vertex, label=left:1] (a) at (0,0) {};
\node[vertex, label=above:2] (b) at (0,8) {};
\node[vertex, label=right:3] (c) at (2,4) {};
\node[vertex, label=below:4] (d) at (3,2) {};
\node[vertex, label=below:5] (e) at (4,6) {};
\node[vertex, label=above:6] (f) at (5,3) {};
\node[vertex, label=above:7] (g) at (8,8) {};
\node[vertex, label=above:8] (h) at (6,5) {};
\node[vertex, label=right:9] (i) at (10,12) {};
\node[vertex, label=right:10] (j) at (8,0) {};
\node[vertex, label=below:11] (k) at (10,10) {};
\node[vertex, label=below:12] (l) at (12,8) {};


\draw[-] (a) -- (b);
\draw[-] (a) -- (c);
\draw[-] (a) -- (d);
\draw[-] (a) -- (j);

\draw[-] (b) -- (c);
\draw[-] (b) -- (e);
\draw[-] (b) -- (g);

\draw[-] (c) -- (d);
\draw[-] (c) -- (e);
\draw[-] (c) -- (f);

\draw[-] (d) -- (f);
\draw[-] (d) -- (j);

\draw[-] (e) -- (f);
\draw[-] (e) -- (g);
\draw[-] (e) -- (h);

\draw[-] (f) -- (h);
\draw[-] (f) -- (j);

\draw[-] (g) -- (h);
\draw[-] (g) -- (i);
\draw[-] (g) -- (j);
\draw[-] (g) -- (k);
\draw[-] (g) -- (l);

\draw[-] (h) -- (j);

\draw[-] (i) -- (k);
\draw[-] (i) -- (l);


\draw[-] (k) -- (l);

\end{tikzpicture}
}
    
\end{center}
\caption{A second graph for which at least one vertex results in an irrevocable Kempe chain tangle for at least one labeling.}
\label{fig:kempe2}
\end{figure}
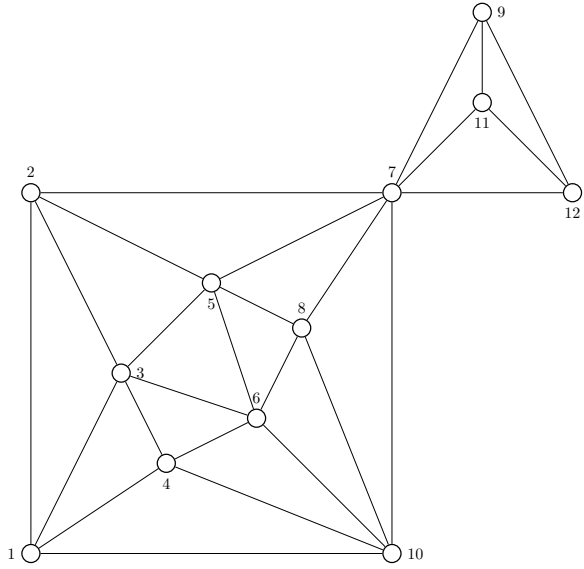

\section{Further Thoughts, Open Questions, and Conclusions}
\label{conc}

In this section, we offer further thoughts about the connection of our work with the Colin de Verdi\`ere graph parameter, give possible strengthening of our results, and conclude by posing some open questions.

\subsection{Connections with the Colin de Verdi\'ere Graph Parameter}
\label{cdv}

As mentioned in the introduction, the research in this paper was prompted by the Colin de Verdi\`ere graph parameter \cite{CDV90}.  Recall that a generalized Laplacian $L = (\ell_{ij})$ of graph $G$ is a matrix such that the entries $\ell_{ij} < 0$ when $(i,j) \in E$, and $\ell_{ij} = 0$ when $(i,j) \notin E$.  We repeat the definition of the Colin de Verdi\'ere invariant, $\mu(G)$, here.
\begin{definition}  The Colin de Verdi\`ere invariant $\mu(G)$ is the largest corank of a generalized Laplacian $L$ of $G$ such that:
\begin{enumerate}
    \item $L$ has exactly one negative eigenvalue of multiplicity one;
    \item there is no nonzero matrix $X=(x_{ij})$ such that $LX=0$ and such that $x_{ij} = 0$ whenever $i=j$ or $\ell_{ij} \neq 0$.
\end{enumerate}
\end{definition}
\noindent Recall that Colin de Verdi\'ere shows that  $\mu(G) \leq 3$ if and only if $G$ is planar; in other words, {\em any} generalized Laplacian of  a planar graph $G$ with exactly one negative eigenvalue of multiplicity 1 will have rank at least $n-3$ (modulo the second condition on the invariant). Recall further that Colin de Verdi\`ere \cite{CDV90} conjectures that $\chi(G) \leq \mu(G) + 1$; this result is known to hold for $\mu(G) \leq 4$.

As discussed in the introduction, there is an intriguing connection between the Colin de Verdi\`ere parameter and the dual slack matrix (CD), in that $S-C$ is a generalized Laplacian as long as all $z_e > 0$.  We had hoped to make use of this fact to be able to construct duals of sufficiently high rank in the case of planar graphs; however, we have been unable to see how to do so.  Recall that an optimal dual solution of corank 3 implies a primal solution with vectors in 3 dimensions for (CP), potentially implying the reference solution for a 4-coloring.

\subsection{Coloring-Independent Cost Matrix}\label{independentsection}
The method given in \Cref{modify} has a significant impediment. The $C$ matrix defined previously assumes knowledge of a coloring for a graph. Ideally, for this method to have greater impact, we would like to find a definition of $C(G)$ and a corresponding dual assignment $(y,z)$ based solely on the structure of an input graph and independent of a specific coloring, but still requiring the primal solution to be our desired low-rank solution. 

Fortunately, there is a formal way of thinking about what any possible $C(G)$ must look like. Let us again assume for a moment that $G$ is a $k$-colorable graph containing a $k$-clique and define $X$ based on a specific coloring, $c$, of $G$ as described above. Then if $S$ is an optimal dual slack matrix and $X$ is an optimal primal matrix, $XS=0$. Then for any $i,j \in [n]$, $\sum_{p=1}^n X_{ip}S_{pj}= 0$. If $X$ is the reference solution for the coloring $c$, this implies $\sum_{p=1}^n X_{ip}S_{pj}=\sum_{p:c(p)=c(i)}S_{pj} - \frac{1}{k-1}\sum_{p: c(p) \neq c(i)}S_{pj}$ = 0. In particular, let $r_1, \dots r_k$ be representatives of the $k$ color classes. Since the above is true for any $i,j$, fixing $j$ and iterating through  $i=r_1, r_2,\dots r_k$ shows $\sum_{p:c(p)=c(r_1)}S_{pj} = \dots = \sum_{p:c(p)=c(r_k)}S_{pj}$. 

This is slightly problematic as it seems to indicate either $C(G)$ or $(y,z)$ will require knowledge of a specific coloring to guarantee this relationship. However, we can at least say something using the fact that a graph containing a $k$-clique must use $k$ different colors for these vertices alone. We show below a cost matrix $C$ and an optimal dual solution for which any feasible primal solution (including the reference solution) is optimal.  

For a graph $G$ containing a $k$-clique, consider $C(G)$ given by $C(G)_{ij} = 1$ if $i=j$ or $(i,j) \in E$ and 0 otherwise. Denote the number of $K_k$s in $G$ by $\mathcal{K}$, the number of $K_k$s containing $i \in V$ by $k_i$, and the number of $K_k$s containing $(i,j) \in E$ by $k_{ij}$. Finally, for the assignment $(y,z),$ set $y_i = C(G)_{ii}-k_i$ and $z_{ij} = C(G)_{ij} - k_{ij}$.

Recall that the dual matrix $S$ is given by $S = C(G) - \sum_{i \in V} y_i E_{ii} - \sum_{e=(i,j) \in E} z_{ij} E_{e}$. Thus using $(y,z)$, $S_{ij} = k_{ij}$ for $i\neq j$ while $S_{ii} = k_i$.

\begin{lemma}
The matrix $S$ obtained using the assignment $(y,z)$ is positive semidefinite. 
\end{lemma}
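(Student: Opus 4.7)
The plan is to exhibit $S$ as an explicit sum of rank-one positive semidefinite matrices, indexed by the $K_k$ cliques of $G$. For each $k$-clique $K$ in $G$, let $\mathbf{1}_K \in \mathbb{R}^n$ denote its indicator vector, whose $i$th entry is $1$ if $i \in K$ and $0$ otherwise. The outer product $\mathbf{1}_K \mathbf{1}_K^T$ is PSD and has $(i,j)$ entry equal to $1$ precisely when both $i$ and $j$ belong to $K$, and $0$ otherwise.

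I would then claim that
\[
    S \;=\; \sum_{K \text{ a } K_k \text{ in } G} \mathbf{1}_K \mathbf{1}_K^T,
\]
and verify this entry by entry. For $i = j$, the right-hand side contributes $1$ for each $K_k$ containing $i$, giving $k_i$, which matches $S_{ii}$. For $i \neq j$ with $(i,j) \in E$, it contributes $1$ for each $K_k$ containing both $i$ and $j$, giving $k_{ij}$, which matches $S_{ij}$. For $i \neq j$ with $(i,j) \notin E$, no $K_k$ contains both endpoints, so the sum is $0$, while $S_{ij} = C(G)_{ij} - 0 = 0$ as well. In each case the entry agrees, so the identity holds.

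Since each $\mathbf{1}_K \mathbf{1}_K^T$ is positive semidefinite and PSD-ness is preserved under nonnegative sums, $S \succeq 0$, which is what we needed to prove. There is no real obstacle here: the only thing that could conceivably go wrong is an arithmetic mismatch between $C(G)$, the chosen $(y,z)$, and the clique counts, and that is exactly what the case check above rules out. Thus the lemma reduces to recognizing $S$ as the (unnormalized) $k$-clique Gram matrix of $G$.
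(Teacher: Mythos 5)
Your proof is correct and takes essentially the same approach as the paper: both decompose $S$ over the $K_k$'s of $G$ into a sum of PSD matrices, one per clique. The only difference is cosmetic—you identify each summand directly as the rank-one outer product $\mathbf{1}_K\mathbf{1}_K^T$, while the paper verifies the single-clique matrix is PSD by computing its eigenvalues.
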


\begin{proof}
Assume $G$ has only one $K_k$ composed from the vertices $v_1, \dots, v_k$. Then $S$ has one eigenvalue of $k$ with corresponding eigenvector $x_k(i) = 1$ if $i \in \{v_1, \dots, v_k\}$ and 0 otherwise. $S$ also has 0 as an eigenvalue with multiplicity $n-1$ corresponding to $n-k$ elementary unit vectors $\{e_i : i \notin K_k\}$ and $k-1$ basis vectors for the set $\{x: x_{v_1} + \dots + x_{v_k} = 0\}$. Therefore $S$ is PSD. Now if $G$ contains $p$ $K_k$s, we can write $S$ as a sum of $p$ PSD matrices where each corresponds to one of $G$'s $K_k$s. Thus $S$ is PSD. 
\end{proof}

\begin{lemma}
The matrix $S$ obtained using the assignment $(y,z)$ is optimal.
\end{lemma}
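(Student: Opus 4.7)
The plan is to prove optimality by weak duality: exhibit a primal feasible $X$ whose objective value equals the dual objective value achieved by $(y,z)$. In fact, the construction of $C(G)$ is so sparse that \emph{every} primal feasible $X$ achieves the same objective value, which will make the matching calculation immediate.

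First, I would compute $C(G)\bullet X$ for an arbitrary feasible $X$ to (CP). Since $C(G)_{ij}=0$ whenever $i\ne j$ and $(i,j)\notin E$, the only entries of $X$ that contribute are the diagonal entries and entries indexed by edges. But these are exactly the entries pinned by the primal constraints: $X_{ii}=1$ and $X_{ij}=-1/(k-1)$ for $(i,j)\in E$. Thus
\[
C(G)\bullet X \;=\; \sum_{i} 1\cdot 1 \;+\; 2\sum_{(i,j)\in E} 1\cdot\left(-\tfrac{1}{k-1}\right) \;=\; n - \frac{2|E|}{k-1},
\]
independent of which feasible $X$ we chose.

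Next, I would compute the dual objective $\sum_i y_i - \frac{2}{k-1}\sum_{e\in E} z_e$ under the assignment $(y,z)$. Using $y_i = 1-k_i$ and $z_e = 1-k_e$, together with the double-counting identities $\sum_{v\in V} k_v = k\,\mathcal{K}$ (each $K_k$ is counted once per vertex) and $\sum_{e\in E} k_e = \binom{k}{2}\mathcal{K}$ (each $K_k$ is counted once per edge), I get
\[
\sum_i y_i = n - k\mathcal{K}, \qquad \frac{2}{k-1}\sum_{e\in E} z_e = \frac{2|E|}{k-1} - \frac{2}{k-1}\binom{k}{2}\mathcal{K} = \frac{2|E|}{k-1} - k\mathcal{K}.
\]
Subtracting, the $k\mathcal{K}$ terms cancel and the dual objective equals $n - \tfrac{2|E|}{k-1}$, matching the primal value computed above.

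Finally, since the previous lemma shows $S$ is PSD and by construction $(y,z)$ satisfies $S = C(G) - \sum_i y_i E_{ii} - \sum_e z_e E_e$, the pair $(y,z)$ is dual feasible. Any feasible primal $X$ (such as the reference solution, which exists whenever $G$ is $k$-colorable with a $K_k$) achieves the same objective value, so by weak duality (Fact~1) both are optimal. I do not anticipate any real obstacle here: the main ``trick'' is just the observation that the sparsity pattern of $C(G)$ aligns exactly with the primal-pinned entries, which collapses the whole argument to a short double-counting computation.
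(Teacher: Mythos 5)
Your proof is correct and follows essentially the same route as the paper's: both compute the dual objective via the double-counting identities $\sum_v k_v = k\mathcal{K}$ and $\sum_e k_e = \binom{k}{2}\mathcal{K}$, cancel the $k\mathcal{K}$ terms, and match the result to $C(G)\bullet X$, which is constant over all primal feasible $X$ because the support of $C(G)$ lies exactly on the entries pinned by the primal constraints. The only difference is cosmetic: you substitute the explicit values to get $n - 2|E|/(k-1)$, whereas the paper leaves the expression in terms of $C(G)$.
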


\begin{proof}
Consider the dual objective function. We have

$$
\begin{aligned}
\sum_{i\in V}y_i - \frac{2}{k-1}\sum_{(i,j) \in E }z_{ij} &= \sum_{i\in V}(C(G)_{ii}-k_i) - \frac{2}{k-1}\sum_{(i,j) \in E}(C(G)_{ij} - k_{ij}) \\
&= (\sum_{i \in V} C(G)_{ii}) -k\mathcal{K}-\left(\frac{2}{k-1}\sum_{(i,j) \in E}C(G)_{ij}\right) + \frac{2}{k-1}{k \choose 2}\mathcal{K} \\
&= \sum_{i \in V} C(G)_{ii} - \frac{2}{k-1}\sum_{(i,j)\in E}C(G)_{ij} \\
&= C(G) \bullet X
\end{aligned}
$$

\noindent for any primal feasible $X$.

\end{proof}

\subsection{Open Questions}

We close with several open questions.  We were unable to give a complete characterization of the $k$-colorable graphs with a $K_k$ for which the strict vector chromatic number (SVCN) has a unique primal solution of the reference solution.  Such graphs must be uniquely colorable, but clearly some further restriction is needed.

When we know the coloring, we can produce a cost matrix $C$ for the semidefinite program (CP) such that the reference solution is the unique optimal solution and it must have rank $k-1$.  We wondered whether one could use (CP) in a greedy coloring scheme, by incrementally constructing the matrix $C$; the graph in Figure \ref{fig:counterexample} shows that our desired scheme does not work in a straightforward manner.  Possibly one could consider an algorithm with a limited amount of backtracking, as long as one could show that the algorithm continued to make progress against some metric.

Another open question is whether one can somehow directly produce a cost matrix $C$ leading to a dual solution of sufficiently high rank that does not need knowledge of the coloring.  And we conclude with the open question that first motivated this work: is it possible to use the Colin de Verdi\`ere parameter to produce this matrix $C$?



\printbibliography

\end{document}